\journal{Elsevier}
\begin{document}

\begin{frontmatter}



\title{An exponential B-spline collocation method for fractional sub-diffusion equation}

\cortext[cor1]{Corresponding author}
\author{X. G. Zhu}
\author{Y. F. Nie\corref{cor1}}
\ead{yfnie@nwpu.edu.cn}
\author{Z. B. Yuan}
\author{J. G. Wang}
\author{Z. Z. Yang}

\address{Department of Applied Mathematics, Northwestern Polytechnical University, Xi' an 710129, P.R. China}

\begin{abstract}
In this article, we propose an exponential B-spline collocation method to approximate the solution of
the fractional sub-diffusion equation of Caputo type. The present method is generated by use of
the Gorenflo-Mainardi-Moretti-Paradisi (GMMP) scheme in time and an efficient exponential B-spline based method in space.
The unique solvability is rigorously discussed. Its stability is well illustrated via a procedure closely resembling the classic von Neumann approach.
The resulting algebraic system is tri-diagonal that can rapidly be solved by the known algebraic solver with low cost and storage.\
A series of numerical examples are finally carried out and by contrast to the other algorithms available in the literature,
numerical results confirm the validity and superiority of our method.
\end{abstract}

\begin{keyword}
Fractional sub-diffusion equation, GMMP scheme, Exponential B-spline collocation method, Solvability and stable analysis.


\end{keyword}

\end{frontmatter}


\newtheorem{theorem}{Theorem}[section]
\newtheorem{lemma}{Lemma}[section]
\newtheorem{definition}{Defition}[section]
\newtheorem{remark}{Remarks}[section]
\newtheorem{assu}{Assumption}[section]
\renewcommand{\theequation}{\arabic{section}.\arabic{equation}}

\section{Introduction}\label{s1}
The basic concept of anomalous diffusion  dates back to Richardson's treatise on atmospheric diffusion in 1926 \cite{Ref001}.
It has increasingly got recognition since the late 1960s within transport theory. In contrast to a typical diffusion, such process no longer follows Gaussian statistics,
then the classic Fick's law fails to apply. Its most striking character is the temporal
power-law pattern dependence of the mean squared displacement \cite{Ref002}, i.e., $\chi^2(t)\sim \kappa t^\alpha$,
for sub-diffusion, $\alpha<1$, while $\alpha>1$ for super-diffusion. Anomalous transport behavior is ubiquitous in physical
scenarios and due to its universal mutuality,  formidable challenges are introduced. 
In recent decades, fractional partial differential equations (PDEs) enter public vision that compare favorably with the usual models to characterize
such transport motions in heterogeneous aquifer and the medium with fractal geometry \cite{Ref003,Ref004}.
An explosive interest has been gained among academic circles to scramble to investigate the theoretical properties, analytic techniques,
and numerical algorithms for fractional PDEs \cite{Ref005,Ref038,Ref006,Ref040,Ref007,Ref068,Ref039,Ref010}.

As a model problem of the class of fractional PDEs described above,
the fractional sub-diffusion equation is  considered here
\begin{align}
\frac{\partial^\alpha u(x,t)}{\partial t^\alpha}-\kappa\frac{\partial^2 u(x,t)}{\partial x^2} =f(x,t),
    \quad a< x< b, \ 0<t\leq T,\label{eq01}
\end{align}
subjected to the initial and boundary conditions as
\begin{align}
  & u(x,0)=\varphi(x),\quad a\leq x\leq b, \label{eq02}\\
  & u(a,t)=g_1(t), \quad u(b,t)=g_2(t), \quad 0<t\leq T, \label{eq03}
\end{align}
where $0<\alpha<1$, $\kappa$ is the positive viscosity constant, and $\varphi(x)$, $g_1(t)$, $g_2(t)$ are the prescribed functions
with sufficient smoothness. In Eq. (\ref{eq01}), the time-fractional derivative is defined in Caputo sense, i.e.,
\begin{align*}
 \frac{\partial^\alpha u(x,t)}{\partial t^\alpha}=\frac{1}{\Gamma(1-\alpha)}
  \int^t_0\frac{\partial u(x,\xi)}{\partial \xi}\frac{d\xi}{(t-\xi)^\alpha},
\end{align*}
with the Gamma function $\Gamma(\cdot)$.  
There have already been some works dedicated to develop numerical algorithms to solve Eqs. (\ref{eq01})-(\ref{eq03}) apart from
a few analytic methods that are not always available for general situations.
Zhang and Liu derived an implicit difference scheme and proved that it is unconditional stable \cite{Ref015}.
Yuste and Acedo studied an explicit difference scheme based on Gr\"{u}nwald-Letnikov formula \cite{Ref014}.
Along the same line, a group of weighted average difference schemes was then obtained \cite{Ref019}.
In \cite{Ref023}, Cui raised a high-order compact difference scheme and
its convergence was detailedly discussed; another similar approach was the compact scheme stated in \cite{Ref020},
for the fractional sub-diffusion equation with Neumann boundary condition.
In \cite{Ref012}, an effective spectral method was constructed by using the common
$L^1$-formula in time and a Legendre spectral approximation in space. Later, this method was extended
to the time-space case \cite{Ref021}. The finite element method was considered by Jiang and Ma \cite{Ref011}.
The semi-discrete lump finite element method was studied by Jin et al.\ for a time-fractional model with a nonsmooth right-hand side \cite{Ref022}.
Liu et al.\ described an implicit RBF meshless approach for the time-fractional diffusion equation \cite{Ref016}. Li et al.\ suggested an adomian decomposition algorithm
for the equations of the same type \cite{Ref009}. In \cite{Ref017}, the authors solved such a model by
the direct discontinuous Galerkin method with the Caputo derivative discretized by a GMMP scheme.
Recently, Luo et al.\ established a quadratic spline collocation method for the fractional sub-diffusion equation \cite{Ref024}, where the convergence
under $L^\infty$-norm was analyzed. Sayevand et al.\ conducted a cubic B-spline collocation method \cite{Ref025}, whose
stability was provided as well. In \cite{Ref026}, a Sinc-Haar collocation method was proposed, which used the Haar operational matrix to convert
the original problem into linear algebraic equations. 

In the present work, regarding the current interest in efficient numerical algorithms for fractional PDEs,
we showcase a collocation method based on exponential B-spline trial function to solve Eqs. (\ref{eq01})-(\ref{eq03}).
The Caputo derivative is tackled by GMMP formula and the spatial derivative is approximated in an exponential spline space
via a uniform nodal collocation strategy. A von Neumann like procedure leads to its unconditional stability.
Its codes are tested on five numerical examples and studied in contrast with the other algorithms.
The obtained method is highly accurate and calls for a lower cost to implement. This may make sense to treat
the equations as the model we consider here with a long time range.
The outline is as follows. In Section \ref{s2}, we give a concise description of the exponential B-spline trial basis,
which will be useful hereinafter. In Section \ref{s3}, we construct a fully discrete exponential B-spline method on uniform meshes
to discretize the model and prove that it is stable. The initial vector is addressed in Section \ref{s4},
which we require to start our method. To evaluate its accuracy and advantages, numerical examples are covered in Section \ref{s5}.

\section{Description of exponential spline functions}\label{s2}
In the sequel, let $a=x_0<x_1<x_2<\cdots< x_{M-1}<x_M=b$ be an equidistant spatial mesh on
the interval $[a,b]$, and for  $M\in\mathbb{N}^+$, denote
\begin{align*}
 h=(b-a)/M, \quad  p=\max\limits_{1\leq j\leq M}p_j,
 \quad s=\sinh(ph), \quad c=\cosh(ph),
\end{align*}
where $p_j$ is the value of function $p(x)$ at mesh knot $x_j$.
The exponential splines are a kind of piecewise non-polynomial functions that are known as a generalization of the semi-classical cubic splines.
They are recognized as a continuum of interpolants ranging from the cubic splines to the linear cases \cite{Ref027}.
Also, like the polynomial splines, a basis of exponential B-splines is admitted and an advisable definition
is the one introduced by McCartin \cite{Ref028}, each of which is support on finite subsegments.
On the above mesh together with another six knots $x_j$, $j=-3,-2,-1,M+1,M+2,M+3$ beyond $[a,b]$, the mentioned exponential B-splines $B_j(x)$,
$j=-1,0,\ldots,M+1$, are given as follows
\begin{align*}
B_j(x)=\left\{
\begin{aligned}
&e(x_{j-2}-x)-\frac{e}{p}\sinh(p(x_{j-2}-x)), \, \qquad\quad\qquad\qquad\qquad\textrm{if} \ \ x\in [x_{j-2},x_{j-1}], \\
&a+b(x_j-x)+c\exp(p(x_j-x))+d\exp(-p(x_j-x)), \quad\textrm{if} \ \ x\in [x_{j-1},x_{j}], \\
&a+b(x-x_j)+c\exp(p(x-x_j))+d\exp(-p(x-x_j)), \quad\textrm{if} \ \ x\in [x_{j},x_{j+1}], \\
&e(x-x_{j+2})-\frac{e}{p}\sinh(p(x-x_{j+2})), \, \qquad\quad\qquad\qquad\qquad\textrm{if} \ \ x\in [x_{j+1},x_{j+2}], \\
&0, \quad \textrm{otherwise},
\end{aligned}
\right.
\end{align*}
where
\begin{gather*}
e=\frac{p}{2(phc-s)},\quad a=\frac{phc}{phc-s},\quad b=\frac{p}{2}\bigg[\frac{c(c-1)+s^2}{(phc-s)(1-c)}\bigg],\\
c=\frac{1}{4}\bigg[\frac{\exp(-ph)(1-c)+s(\exp(-ph)-1)}{(phc-s)(1-c)}\bigg], \quad
d=\frac{1}{4}\bigg[\frac{\exp(ph)(c-1)+s(\exp(ph)-1)}{(phc-s)(1-c)}\bigg].
\end{gather*}
The values of $B_j(x)$ at each knot are given as
\begin{align}\label{eq11}
B_j(x_k)=\left\{
\begin{aligned}
&1,  &\textrm{if}\ \ k=j, \\
&\frac{s-ph}{2(phc-s)},  &\textrm{if} \ \ k=j\pm1,\\
&0,  &\textrm{if} \ \ k=j\pm2.
\end{aligned}
\right.
\end{align}
The values of $B'_j(x)$ and $B''_j(x)$ at each knot are given as
\begin{align}\label{eq12}
B'_j(x_k)=\left\{
\begin{aligned}
&0  &\textrm{if}\ \ k=j, \\
&\frac{\mp p(1-c)}{2(phc-s)},  &\textrm{if} \ \ k=j\pm1,\\
&0,  &\textrm{if} \ \ k=j\pm2,
\end{aligned}
\right.
\end{align}
and
\begin{align}\label{eq13}
B''_j(x_k)=\left\{
\begin{aligned}
&\frac{-p^2s}{phc-s},  &\textrm{if}\ \ k=j, \\
&\frac{p^2s}{2(phc-s)},  &\textrm{if} \ \ k=j\pm1,\\
&0,  &\textrm{if} \ \ k=j\pm2.
\end{aligned}
\right.
\end{align}

The set of $B_j(x)\in C^2(\mathbb{R})$, $j=-1,0,\ldots,M+1$, are linearly independent and form an exponential
spline space on $[a,b]$. The non-negative free $p$ is termed ``tension" parameter and $p\rightarrow 0$ yields cubic spline whereas
$p\rightarrow \infty$ corresponds to the linear spline. The cubic spline interpolation causes extraneous
inflexion points while the exponential spline interpolation allows to remedy this issue.


\section{An exponential B-spline collocation method}\label{s3}
Let $t_n=n\tau$, $n=0,1,\ldots,N$, $T=\tau N$, $N\in\mathbb{N}^+$, and $x_j=a+jh$, $j=-1,0,\ldots,M+1$, $h=(b-a)/M$, $M\in\mathbb{N}^+$.
On the time-space lattice, we set about deriving the exponential B-spline collocation method for  Eqs. (\ref{eq01})-(\ref{eq03}).

\subsection{GMMP scheme for Caputo derivative}
To start with, we recall the Caputo and Riemann-Liouville fractional derivatives.
Given a smooth enough $f(x,t)$, the $\alpha$-th Caputo derivative is defined by
\begin{align}\label{eq04}
 {^C_0}D^\alpha_tf(x,t)=\frac{1}{\Gamma(m-\alpha)}
  \int^t_0\frac{\partial^m f(x,\xi)}{\partial \xi^m}\frac{d\xi}{(t-\xi)^{1+\alpha-m}},
\end{align}
and the $\alpha$-th Riemann-Liouville type derivative is defined by
\begin{align}\label{eq05}
 {^{RL}_0}D^\alpha_tf(x,t)=\frac{1}{\Gamma(m-\alpha)}\frac{\partial^m}{\partial t^m}
  \int^t_0\frac{f(x,\xi)d\xi}{(t-\xi)^{1+\alpha-m}},
\end{align}
where, $m-1<\alpha<m$, $m\in\mathbb{N}$ is not less than $1$. In common sense, (\ref{eq04}) owns merits in handling the initial-valued problems,
and thereby is utilized in time in most instances. (\ref{eq04}), (\ref{eq05}) interconvert into each other through 
\begin{equation}\label{eq06}
    {^C_0}D^\alpha_tf(x,t)={^{RL}_0}D^\alpha_tf(x,t)-\sum^{m-1}_{l=0}\frac{f^{(l)}(x,0)t^{l-\alpha}}{\Gamma(l+1-\alpha)}.
\end{equation}
They are equal when $f^{(k)}(x,0)=0$, $k=0,1,\ldots,m-1$ are fixed; we refer the readers to \cite{Ref069,Ref032} for deeper insight.
A GMMP scheme is derived by rewriting Eq. (\ref{eq06}) and using a proper scheme to discretize (\ref{eq05}), which reads \cite{Ref030}
\begin{equation}\label{eq07}
   {^C_0}D^\alpha_tf(x,t_n)\approx \frac{1}{\tau^\alpha}\sum_{k=0}^{n}\omega^\alpha_kf(x,t_{n-k})
   -\frac{1}{\tau^\alpha}\sum^{m-1}_{l=0}\sum^{n}_{k=0}\frac{\omega^\alpha_k f^{(l)}(x,0)t_{n-k}^{l}}{l!},
\end{equation}
with several valid sets of coefficients $\omega^\alpha_k$ \cite{Ref014}. In particular, when
\begin{equation}\label{eq08}
   \omega^\alpha_k=(-1)^k\binom\alpha k=\frac{\Gamma{(k-\alpha)}}{\Gamma{(-\alpha)}\Gamma{(k+1)}}, \quad k=0,1,2,\ldots
\end{equation}
it is the one given by Gorenflo et al.\ \cite{Ref029}. In what follows,  we chiefly consider such case;
on selecting $\omega^\alpha_k$ as (\ref{eq08}) and imposing $0<\alpha<1$, (\ref{eq07}) simply  reduces to 
\begin{equation}\label{eq09}
   {^C_0}D^\alpha_tf(x,t_n)= \frac{1}{\tau^\alpha}\sum_{k=0}^{n}\omega^\alpha_kf(x,t_{n-k})
   -\frac{1}{\tau^\alpha}\sum_{k=0}^{n}\omega^\alpha_kf(x,0)+\mathscr{R}_\tau,
\end{equation}
with the truncated error $\mathscr{R}_\tau$ satisfying $\mathscr{R}_\tau=\mathscr{O}(\tau)$.
\begin{lemma}\label{le1}
The coefficients $\omega^\alpha_k$ defined in (\ref{eq08}) fulfill
\begin{itemize}
   \item[(a)] $\omega^\alpha_0=1, \quad \omega^\alpha_k< 0$, \ \ $\forall k\geq 1$,
   \item[(b)] $\sum_{k=0}^{\infty}\omega^\alpha_k=0, \quad \sum_{k=0}^{n-1}\omega^\alpha_k>0$.
\end{itemize}
\end{lemma}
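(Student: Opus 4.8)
The plan is to treat the two parts separately, driving both from the simple ratio relation between consecutive coefficients. From the definition $\omega^\alpha_k=(-1)^k\binom{\alpha}{k}$ and Pascal's rule $\binom{\alpha}{k}=\frac{\alpha-k+1}{k}\binom{\alpha}{k-1}$, I would first record the recurrence
\[
\omega^\alpha_k=\frac{k-1-\alpha}{k}\,\omega^\alpha_{k-1},\qquad k\ge 1,
\]
together with the starting values $\omega^\alpha_0=1$ and $\omega^\alpha_1=-\alpha$. For part (a), since $0<\alpha<1$ the factor $(k-1-\alpha)/k$ is strictly positive for every $k\ge 2$ (indeed $k-1-\alpha\ge 1-\alpha>0$), so the sign of $\omega^\alpha_k$ is preserved from $\omega^\alpha_1=-\alpha<0$ onward; a one-line induction then yields $\omega^\alpha_k<0$ for all $k\ge 1$, while $\omega^\alpha_0=1$ is immediate. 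As a by-product the same factor is $<1$, so $|\omega^\alpha_k|$ is monotonically decreasing, which I would keep in reserve for the convergence argument below.

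For part (b) the key is a closed form for the partial sums. Using Pascal's rule to telescope, I would prove by induction (or by the standard identity $\sum_{k=0}^n(-1)^k\binom{r}{k}=(-1)^n\binom{r-1}{n}$) that
\[
S_n:=\sum_{k=0}^{n}\omega^\alpha_k=(-1)^n\binom{\alpha-1}{n}=\prod_{j=1}^{n}\Big(1-\frac{\alpha}{j}\Big)=\frac{\Gamma(n+1-\alpha)}{\Gamma(1-\alpha)\,n!}.
\]
Every factor $1-\alpha/j$ is strictly positive because $\alpha<1\le j$, hence $S_n>0$; in particular $\sum_{k=0}^{n-1}\omega^\alpha_k=S_{n-1}>0$, which is the second assertion. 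It then remains to evaluate $\lim_{n\to\infty}S_n$. Taking logarithms in the product and invoking $\log(1-x)\le -x$ gives $\log S_n\le -\alpha\sum_{j=1}^n 1/j\to-\infty$, so $S_n\to 0$; equivalently one may cite the Gamma asymptotic $\Gamma(n+1-\alpha)/\Gamma(n+1)\sim n^{-\alpha}$, whence $S_n\sim n^{-\alpha}/\Gamma(1-\alpha)\to 0$ since $\alpha>0$. Because the $\omega^\alpha_k$ are eventually of one sign with tails of order $k^{-1-\alpha}$, the series is absolutely convergent and $\sum_{k=0}^\infty\omega^\alpha_k=\lim_n S_n=0$.

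I expect the only genuine obstacle to be this last limit: showing not merely that the series converges but that it sums exactly to zero. The cleanest route is the explicit product $S_n=\prod_{j=1}^n(1-\alpha/j)$, which converts the question into the divergence of the harmonic series and sidesteps any delicate appeal to Abel summation of the generating function $(1-z)^\alpha$ at $z=1$. One caveat worth flagging is that the identity $\sum_{k=0}^\infty\omega^\alpha_k=0$ relies essentially on $\alpha>0$ (for $\alpha\le 0$ the product fails to vanish); this is precisely the regime $0<\alpha<1$ fixed in the hypotheses, so the restriction costs nothing here.
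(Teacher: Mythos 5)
Your proof is correct and complete. Note, however, that the paper does not actually prove Lemma \ref{le1}: its ``proof'' is only a pointer to the references \cite{Ref036,Ref032}, these being classical properties of the Gr\"unwald--Letnikov (binomial) coefficients. So there is no in-paper argument to compare against, and your self-contained derivation is the standard elementary one, executed cleanly: the recurrence $\omega^\alpha_k=\frac{k-1-\alpha}{k}\,\omega^\alpha_{k-1}$ with $\omega^\alpha_1=-\alpha<0$ and $k-1-\alpha\geq 1-\alpha>0$ for $k\geq 2$ settles (a) by induction; the telescoped identity $S_n=\sum_{k=0}^{n}\omega^\alpha_k=(-1)^n\binom{\alpha-1}{n}=\prod_{j=1}^{n}\bigl(1-\tfrac{\alpha}{j}\bigr)=\frac{\Gamma(n+1-\alpha)}{\Gamma(1-\alpha)\,n!}$ gives $S_{n-1}>0$ at a glance, and the divergence of the harmonic series (or the asymptotic $S_n\sim n^{-\alpha}/\Gamma(1-\alpha)$) forces $S_n\to 0$, hence $\sum_{k=0}^{\infty}\omega^\alpha_k=0$. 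Your instinct that the only delicate point is the exact vanishing of the infinite sum is right, and the product formula is indeed the cleanest way to get it: the alternative route via the generating function $(1-z)^\alpha=\sum_k\omega^\alpha_k z^k$ at $z=1$ requires an appeal to Abel's theorem, which you correctly sidestep; your caveat that $\alpha>0$ is essential is also exactly on point, since for $\alpha\leq 0$ the product $\prod_j(1-\alpha/j)$ does not tend to zero.

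Two trivial remarks. First, the ratio identity $\binom{\alpha}{k}=\frac{\alpha-k+1}{k}\binom{\alpha}{k-1}$ that you use to derive the recurrence is the absorption identity, not Pascal's rule; the genuine Pascal's rule $\binom{r}{k}=\binom{r-1}{k-1}+\binom{r-1}{k}$ is what you correctly invoke in the telescoping step for $S_n$. Second, the closing sentence about absolute convergence is superfluous: the sum of the series is by definition $\lim_{n\to\infty}S_n$, so once $S_n\to 0$ is established nothing more is needed (though your tail estimate $|\omega^\alpha_k|=\mathscr{O}(k^{-1-\alpha})$ is itself correct, since $\omega^\alpha_k=\frac{\Gamma(k-\alpha)}{\Gamma(-\alpha)\Gamma(k+1)}$).
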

\begin{proof}
See references \cite{Ref036,Ref032} for details.
\end{proof}

\subsection{A fully discrete exponential B-spline based scheme}
Define $V_{M+3}=\textrm{span}\{B_{-1}(x),B_0(x),\ldots,B_{M}(x),B_{M+1}(x)\}$ over the interval $[a,b]$ referred to as a $(M+3)$-dimensional
exponential spline space. Then, an approximate solution to Eqs. (\ref{eq01})-(\ref{eq03}) is sought on $V_{M+3}$ in the form
\begin{equation}\label{eq10}
    u_N(x,t)=\sum_{j=-1}^{M+1}\alpha_j(t)B_j(x),
\end{equation}
with the unknown weights $\{\alpha_j(t)\}_{j=-1}^{M+1}$ yet to be determined by some certain restrictions.
Discretizing Eq. (\ref{eq01}) by using (\ref{eq09}) in time, we have
\begin{align*}
   u(x,t_n)-\tau^\alpha\kappa\frac{\partial^2 u(x,t_n)}{\partial x^2}=
   -\sum_{k=1}^{n-1}\omega^\alpha_ku(x,t_{n-k})+\sum_{k=0}^{n-1}\omega^\alpha_ku(x,0)+\tau^\alpha f(x,t_n)+\tau^\alpha\mathscr{R}_\tau.
\end{align*}
Let $\alpha^n_j=\alpha_j(t_n)$.
On replacing $u(x,t)$ by $u_N(x,t)$ and imposing the following collocation and boundary conditions
\begin{align*}
   &u_N(x_j,t_n)-\tau^\alpha\kappa\frac{\partial^2 u_N(x_j,t_n)}{\partial x^2} =
   -\sum_{k=1}^{n-1}\omega^\alpha_ku_N(x_j,t_{n-k})+\sum_{k=0}^{n-1}\omega^\alpha_ku_N(x_j,0)+\tau^\alpha f(x_j,t_n), \\
   &u_N(x_0,t_n)=g_1(t_n), \quad u_N(x_M,t_n)=g_2(t_n),
\end{align*}
at each nodal point $x_j$, $j=0,1,\ldots,M$, we obtain
\begin{align}\label{eq16}
   A\alpha^n_{j-1}+A'\alpha^n_j+A\alpha^n_{j+1}
   =-\sum_{k=1}^{n-1}\omega^\alpha_kP^{n-k}_j+\sum_{k=0}^{n-1}\omega^\alpha_kP_j^0+R^n_j,
\end{align}
and the boundary sets
\begin{align}
&\frac{s-ph}{2(phc-s)}\alpha^n_{-1}+\alpha^n_0+\frac{s-ph}{2(phc-s)}\alpha^n_{1}=g_1^n,\label{eq14}\\
&\frac{s-ph}{2(phc-s)}\alpha^n_{M-1}+\alpha^n_M+\frac{s-ph}{2(phc-s)}\alpha^n_{M+1}=g_2^n, \label{eq15}
\end{align}
owing to (\ref{eq10}) and (\ref{eq11})-(\ref{eq13}), with
\begin{align*}
 &A=-\tau^\alpha\kappa p^2s+\omega^\alpha_0(s-ph),\quad A'=2\tau^\alpha\kappa p^2s+2\omega^\alpha_0(phc-s),\\
 &P^{m}_j=(s-ph)\alpha^m_{j-1}+2(phc-s)\alpha^m_j+(s-ph)\alpha^m_{j+1},\quad R^n_j=2\tau^\alpha(phc-s)f_j^n.
\end{align*}
where $m=0,1,\ldots,n-1$. As a result, using Eqs. (\ref{eq14})-(\ref{eq15}) to remove the unknown variables $\alpha^n_{-1}$,
$\alpha^n_{M+1}$ in Eq.(\ref{eq16}) when $j=0$, $M$, the above system admits a linear system of algebraic equations of size $(M+1)\times(M+1)$, as below
\begin{equation}\label{eqap}
   \textbf{A}\boldsymbol{\alpha}^n=-\sum_{k=1}^{n-1}\omega^\alpha_k\textbf{B}\boldsymbol{\alpha}^{n-k}
    +\sum_{k=0}^{n-1}\omega^\alpha_k\textbf{B}\boldsymbol{\alpha}^0+\textbf{F}^n,
\end{equation}
where
\begin{align*}
     \textbf{A}=\left(
    \begin{array}{ccccc}
      2\tau^\alpha\kappa p^3hs(c-1) & 0 &   &   &   \\
      A & A' & A &  &  \\
        & \cdots & \ldots & \ldots &  \\
        &  & \ldots & \ldots & \ldots \\
        &  & A & A' & A \\
        &   &   & 0 & 2\tau^\alpha\kappa p^3hs(c-1) \\
    \end{array}
  \right),
\end{align*}
\begin{align*}
     \textbf{B}=\left(
    \begin{array}{ccccc}
      0 & 0 &   &   &   \\
      s-ph & 2(phc-s) & s-ph &  &  \\
        & \cdots & \ldots & \ldots &  \\
        &  & \ldots & \ldots & \ldots \\
        &  & s-ph & 2(phc-s) & s-ph \\
        &   &   & 0 & 0 \\
    \end{array}
  \right),
\end{align*}
\begin{align*}
\boldsymbol{\alpha}^{m}=\left(
\begin{array}{c}
  \alpha^{m}_0 \\
  \alpha^{m}_1 \\
  \vdots \\
  \alpha^{m}_{M-1} \\
  \alpha^{m}_M
\end{array}
\right), \ \
\textbf{F}^{n}=(phc-s)\left(
\begin{array}{c}
  2\tau^\alpha(s-ph)f^n_0+d^n_0 \\
  2\tau^\alpha f^n_1 \\
  \vdots \\
  2\tau^\alpha f^n_{M-1} \\
  2\tau^\alpha(s-ph)f^n_M+d^n_M
\end{array}
\right),
\end{align*}
in which, $m=0,1,\ldots,n$, and $d^n_0$, $d^n_M$ are as follows
\begin{align*}
&d^n_0=-2(s-ph)\sum_{k=0}^{n-1}\omega^\alpha_kg_1^{n-k}+2(s-ph)\sum_{k=0}^{n-1}\omega^\alpha_k\varphi_0+2\tau^\alpha\kappa p^2sg^n_1, \\
&d^n_M=-2(s-ph)\sum_{k=0}^{n-1}\omega^\alpha_kg_2^{n-k}+2(s-ph)\sum_{k=0}^{n-1}\omega^\alpha_k\varphi_M+2\tau^\alpha\kappa p^2sg^n_2.
\end{align*}

The weights $\boldsymbol{\alpha}^n$ depends on $\boldsymbol{\alpha}^{n-k}$, $k=0,1,\ldots,n$, at its previous time levels and is found via
a recursive style; once $\boldsymbol{\alpha}^n$ is obtained, $\alpha^n_{-1}$, $\alpha^n_{M+1}$  are obvious due to Eqs. (\ref{eq14})-(\ref{eq15}).
On the other side, $\textbf{A}$ is a $(M+1)\times(M+1)$ tri-diagonal matrix, therefore the system can be performed by the well-known Thomas algorithm,
which simply needs the arithmetic operation cost $\mathscr{O}(M+1)$.

\section{Initial state}\label{s4}
In order to start Eq. (\ref{eqap}), an appropriate initial vector $\boldsymbol{\alpha}^0$ to the system is required.
To this end, we employ the initial conditions
\begin{equation*}
 u_N(x_j,0)=\varphi(x_j), \quad j=0,1,\cdots,M,
\end{equation*}
together with the collocation constraints 
\begin{equation*}
    u'_N(x_0,0)=\varphi'(x_0),\quad \ u'_N(x_M,0)=\varphi'(x_M),
\end{equation*}
got via Eq. (\ref{eq02}) explicitly to  determine a unique initial vector $\boldsymbol{\alpha}^0$ by
\begin{equation}\label{eq17}
    \textbf{K}\boldsymbol{\alpha}^0=\textbf{U},
\end{equation}
with the notations
\begin{align*}
     \textbf{K}=\left(
    \begin{array}{ccccc}
      phc-s & s-ph &   &   &   \\
      s-ph & 2(phc-s) & s-ph &  &  \\
        & \cdots & \ldots & \ldots &  \\
        &  & \ldots & \ldots & \ldots \\
        &  & s-ph & 2(phc-s) & s-ph \\
        &   &   & s-ph & phc-s \\
    \end{array}
  \right),
\end{align*}
\begin{align*}
\boldsymbol{\alpha}^{0}=\left(
\begin{array}{c}
  \alpha^{0}_0 \\
  \alpha^{0}_1 \\
  \vdots \\
  \alpha^{0}_{M-1} \\
  \alpha^{0}_M
\end{array}
\right), \ \quad
\textbf{U}=(phc-s)\left(
\begin{array}{c}
  \varphi_0-\frac{(s-ph)\varphi'(x_0)}{p(1-c)} \\
  2\varphi_1 \\
  \vdots \\
  2\varphi_{M-1} \\
  \varphi_M+\frac{(s-ph)\varphi'(x_M)}{p(1-c)}
\end{array}
\right).
\end{align*}

In the same fashion, $\textbf{K}$ is a $(M+1)\times(M+1)$ tri-diagonal matrix, so the solution
of Eq. (\ref{eq17}) can also be computed by Thomas algorithm.

\section{Stability and solvability}\label{s5}
In this section, our objective is to prove that Eqs. (\ref{eqap})-(\ref{eq17}) are uniquely solvable and unconditionally stable.
If $\tilde{\alpha}^n_j$, $n\geq1$, is a perturbed solution of Eq. (\ref{eq16}), we shall study how the perturbation
$\rho^n_j=\alpha^n_j-\tilde{\alpha}^n_j$, which solves the homogeneous counterpart of the equation by
\begin{equation}\label{eq18}
   A\rho^n_{j-1}+A'\rho^n_j+A\rho^n_{j+1}
   =-\sum_{k=1}^{n-1}\omega^\alpha_kZ^{n-k}_j+\sum_{k=0}^{n-1}\omega^\alpha_kZ_j^0,
\end{equation}
evolves over time, where $Z^{0}_j$, $Z^{n-k}_j$ are the quantities like $P^{0}_j$, $P^{n-k}_j$ with regard to the perturbation.
Since the classic von Neumann method does not work for Eq. (\ref{eq18}), a fractional procedure is employed to analyze its stability.
This extension was recently laid down in \cite{Ref031} applied to discuss a non-uniform implicit difference scheme
for fractional diffusion equations.

\begin{lemma}
The system (\ref{eqap})-(\ref{eq17}) are uniquely solvable since their coefficient matrices $\textbf{A}$, $\textbf{K}$ are
strictly diagonally dominant.
\end{lemma}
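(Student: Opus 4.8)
The plan is to invoke the Levy--Desplanques theorem, that a strictly diagonally dominant matrix is nonsingular; once the asserted dominance is verified, the systems (\ref{eqap}) and (\ref{eq17}), whose coefficient matrices are $\textbf{A}$ and $\textbf{K}$, possess unique solutions. Everything therefore reduces to a row-by-row comparison of the modulus of each diagonal entry against the sum of the moduli of the off-diagonal entries in that row. The estimates rest on four elementary facts about $x:=ph>0$, namely $s-ph>0$, $phc-s>0$, $c-1>0$, and the key inequality $ph(c+1)>2s$. The first three are immediate from $\sinh x>x$, $x\cosh x>\sinh x$ and $\cosh x>1$ for $x>0$; the fourth I would establish by setting $g(x)=x(\cosh x+1)-2\sinh x$, noting $g(0)=g'(0)=0$ and $g''(x)=x\cosh x>0$, whence $g>0$ on $(0,\infty)$. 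I also use $\omega^\alpha_0=1$ from Lemma~\ref{le1}(a).

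For $\textbf{K}$ the verification is direct. Its interior rows carry diagonal $2(phc-s)$ and two off-diagonal entries $s-ph$, so strict dominance is exactly $2(phc-s)>2(s-ph)$, i.e.\ $ph(c+1)>2s$, which is the key inequality. The two boundary rows carry diagonal $phc-s$ and a single off-diagonal $s-ph$, and the required $phc-s>s-ph$ is again $ph(c+1)>2s$. Hence $\textbf{K}$ is strictly diagonally dominant. The first and last rows of $\textbf{A}$ are even simpler: their only nonzero entry is the diagonal $2\tau^\alpha\kappa p^3hs(c-1)$, which is strictly positive because $\tau,\kappa,p,h,s>0$ and $c-1>0$, so the dominance is trivial there.

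The interior rows of $\textbf{A}$, with diagonal $A'=2\tau^\alpha\kappa p^2s+2(phc-s)$ and off-diagonals $A=-\tau^\alpha\kappa p^2s+(s-ph)$, are the crux, the one subtlety being that $A$ may change sign (the viscosity term is negative while $s-ph$ is positive), so the absolute value must be handled carefully. Since $A'>0$, the strict dominance $A'>2|A|$ is equivalent to the pair $A'-2A>0$ and $A'+2A>0$. A short calculation gives $A'-2A=4\tau^\alpha\kappa p^2s+2ph(c+1)-4s$, positive by the key inequality, and $A'+2A=2ph(c-1)$, positive because $c-1>0$. This disposes of both sign cases at once and completes the dominance of $\textbf{A}$. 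I expect the only genuine obstacle to be the transcendental inequality $ph(c+1)>2s$ (equivalently the positivity of $A'-2A$); the convexity argument for $g$ above removes it, after which the remaining estimates are purely algebraic.
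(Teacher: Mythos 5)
Your proof is correct and follows the same overall strategy as the paper---establish strict diagonal dominance of $\textbf{A}$ and $\textbf{K}$, then conclude nonsingularity (the paper uses the Levy--Desplanques fact implicitly, without naming it)---but it differs in two technical respects worth noting. First, where you split $A'>2|A|$ (using $A'>0$) into the pair $A'-2A=4\tau^\alpha\kappa p^2s+2ph(c+1)-4s>0$ and $A'+2A=2ph(c-1)>0$, the paper disposes of the sign ambiguity of $A$ in one stroke with the triangle inequality, $|A|\leq \tau^\alpha\kappa p^2s+\omega^\alpha_0(s-ph)$, which gives $|A'|-2|A|\geq 2\omega^\alpha_0\big((phc-s)-(s-ph)\big)$ directly; your split is slightly longer but yields exact closed forms and makes explicit that the dominance margin is uniform in $\tau$ and $\kappa$, while the paper's bound is quicker. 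Second, both proofs reduce to the same crux inequality $ph(c+1)>2s$, i.e.\ $(phc-s)-(s-ph)>0$, but you prove it by convexity---$g(x)=x(\cosh x+1)-2\sinh x$ with $g(0)=g'(0)=0$ and $g''(x)=x\cosh x>0$---whereas the paper compares the Taylor expansions of $s-ph$ and $phc-ph$ term by term, using $2\,(2k)!<(2k+1)!$ for $k\geq 1$; the two arguments are equally rigorous, yours being shorter and the paper's avoiding differentiation. You are also more complete than the paper: you verify the boundary rows of $\textbf{A}$ (whose only nonzero entry is the positive diagonal $2\tau^\alpha\kappa p^3hs(c-1)$) and check $\textbf{K}$ row by row, whereas the paper treats only the interior rows of $\textbf{A}$ and then asserts ``so is $\textbf{K}$''---a claim that, as your row-by-row check confirms, does indeed reduce to the same inequality $phc-s>s-ph$ for both the interior and boundary rows of $\textbf{K}$.
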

\begin{proof}
In virtue of $A$, $A'$, one gets
\begin{align*}
\big|A'\big|-2\big|A\big|&=2\big|\tau^\alpha\kappa p^2s+\omega^\alpha_0(phc-s)\big|-2\big|-\tau^\alpha\kappa p^2s+\omega^\alpha_0(s-ph)\big|\\
 &\geq 2\omega^\alpha_0(phc-s)-2\omega^\alpha_0(s-ph),\\
 &=2\omega^\alpha_0\big((phc-s)-(s-ph)\big).
\end{align*}
Then, the lemma is ascribed to $s-ph<phc-s$. Using the following Taylor's expansions
\begin{align*}
   &s-ph=\frac{(ph)^3}{3!}+\frac{(ph)^5}{5!}+\cdots+\frac{(ph)^{2k+1}}{(2k+1)!}+\cdots \\
   &phc-ph=\frac{(ph)^3}{2!}+\frac{(ph)^5}{4!}+\cdots+\frac{(ph)^{2k+1}}{(2k)!}+\cdots
\end{align*}
results in 
\begin{align*}
   (phc-ph)-2(s-ph)&=(ph)^3\Bigg(\frac{1}{2!}-\frac{2}{3!}\Bigg)+(ph)^5\Bigg(\frac{1}{4!}-\frac{2}{5!}\Bigg)\\
      &\quad+\cdots+(ph)^{2k+1}\Bigg(\frac{1}{(2k)!}-\frac{2}{(2k+1)!}\Bigg)+\cdots
\end{align*}
Due to $(2k)!\times2<(2k)!\times(2k+1)$, $k\geq1$, there exist
\begin{equation*}
  (phc-s)-(s-ph)=(phc-ph)-2(s-ph)>0,
\end{equation*}
and $\big|A'\big|-2\big|A\big|>0$, which implies $\textbf{A}$ is strictly diagonally dominant, so is $\textbf{K}$.
Hence, Eqs. (\ref{eqap})-(\ref{eq17}) are uniquely solvable. The proof is completed.
\end{proof}

The stable analysis is proceeded as following.
\begin{theorem}
The system (\ref{eqap})-(\ref{eq17}) are unconditionally stable.
\end{theorem}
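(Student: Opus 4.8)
The plan is to mimic the classical von Neumann analysis by injecting a single Fourier mode into the homogeneous error recursion (\ref{eq18}) and tracking the growth of its time-dependent amplitude. Concretely, I would set $\rho^n_j=\zeta_n e^{\mathrm{i}\sigma j}$ with $\sigma=\beta h$ the real phase of an arbitrary wave number $\beta$, and substitute into (\ref{eq18}). Because every spatial stencil appearing there—both on the left through $A,A'$ and inside each $Z^m_j$—is symmetric with a common off-diagonal weight, the exponentials $e^{\mathrm{i}\sigma(j\pm1)}$ collapse into cosines and the factor $e^{\mathrm{i}\sigma j}$ cancels throughout. This reduces (\ref{eq18}) to a scalar convolution recursion in the amplitudes $\zeta_n$,
\begin{align*}
   \mu\,\zeta_n=-\nu\sum_{k=1}^{n-1}\omega^\alpha_k\zeta_{n-k}+\nu\,\zeta_0\sum_{k=0}^{n-1}\omega^\alpha_k,
\end{align*}
where $\mu=2A\cos\sigma+A'$ and $\nu=2(s-ph)\cos\sigma+2(phc-s)$ are the symbols of the two stencils.

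The second step is to extract the single algebraic fact that drives everything: the ratio $r=\mu/\nu$ satisfies $r\geq1$. A short computation using $\omega^\alpha_0=1$ (Lemma \ref{le1}) gives $\mu=\nu+2\tau^\alpha\kappa p^2 s\,(1-\cos\sigma)$, so $\mu-\nu=2\tau^\alpha\kappa p^2 s\,(1-\cos\sigma)\geq0$ because $s=\sinh(ph)>0$, $\kappa>0$ and $1-\cos\sigma\geq0$. At the same time $\nu>0$: its minimum over $\cos\sigma\in[-1,1]$ is attained at $\cos\sigma=-1$ and equals $2\big[(phc-s)-(s-ph)\big]$, which is exactly the quantity shown to be positive in the preceding solvability lemma. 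Hence $r=1+(\mu-\nu)/\nu\geq1$, and dividing the recursion by $\nu$ puts it in the clean form $r\zeta_n=-\sum_{k=1}^{n-1}\omega^\alpha_k\zeta_{n-k}+\zeta_0\sum_{k=0}^{n-1}\omega^\alpha_k$.

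The final step is an induction proving $|\zeta_n|\leq|\zeta_0|$ for every $n\geq1$, which is the desired stability bound since $\beta$ was arbitrary. The base case $n=1$ is immediate: the first sum is empty and the second equals $\omega^\alpha_0=1$, so $|\zeta_1|=|\zeta_0|/r\leq|\zeta_0|$. For the inductive step I would take absolute values, use $\omega^\alpha_k<0$ for $k\geq1$ (so $|\omega^\alpha_k|=-\omega^\alpha_k$) and $\sum_{k=0}^{n-1}\omega^\alpha_k>0$ from Lemma \ref{le1}, then insert the hypothesis $|\zeta_{n-k}|\leq|\zeta_0|$. The two sums then telescope through $\sum_{k=0}^{n-1}\omega^\alpha_k-\sum_{k=1}^{n-1}\omega^\alpha_k=\omega^\alpha_0=1$, leaving $r|\zeta_n|\leq|\zeta_0|$ and hence $|\zeta_n|\leq|\zeta_0|$. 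I expect the main obstacle to lie not in the induction but in the symbol bookkeeping of the first two steps—verifying that $\mu-\nu$ is precisely the nonnegative diffusion contribution and that $\nu$ inherits strict positivity from the solvability inequality; once $r\geq1$ and the sign pattern of the $\omega^\alpha_k$ are secured, the non-amplification estimate is forced and holds for any $\tau$ and $h$, giving unconditional stability.
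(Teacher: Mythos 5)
Your proposal is correct and follows essentially the same route as the paper: the same single-mode von Neumann ansatz reduces (\ref{eq18}) to the scalar recursion with amplification factor $G=\nu/\mu=1/r\leq1$ (your $\mu=\nu+2\tau^\alpha\kappa p^2s(1-\cos\sigma)$ is exactly the paper's identity for $2A\cos(\upsilon h)+A'$), followed by the identical induction using $\omega^\alpha_0=1$, $\omega^\alpha_k<0$ for $k\geq1$, and $\sum_{k=0}^{n-1}\omega^\alpha_k>0$ from Lemma \ref{le1}. The only difference is cosmetic: you make explicit that $\nu>0$ by evaluating its minimum at $\cos\sigma=-1$ and invoking the solvability lemma's inequality $(phc-s)-(s-ph)>0$, a detail the paper leaves implicit when asserting $G\leq1$.
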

\begin{proof}
As the usual way, we investigate a single generic mode $\rho^k_j=\zeta^k_\upsilon\exp(\textrm{i}\upsilon jh)$,
with $\textrm{i}=\sqrt{-1}$ and the wave number $\upsilon$.
Inserting it into Eq. (\ref{eq18}) yields
\begin{align*}
   2A\zeta^n_\upsilon\cos(\upsilon h)+A'\zeta^n_\upsilon
    =-\sum_{k=1}^{n-1}\omega^\alpha_kS^{n-k}_\upsilon
    + \sum_{k=0}^{n-1}\omega^\alpha_kS^{0}_\upsilon,
\end{align*}
where
\begin{align*}
&S^{0}_\upsilon=2(s-ph)\cos(\upsilon h)\zeta^{0}_\upsilon +2(phc-s)\zeta^{0}_\upsilon,\\
&S^{n-k}_\upsilon=2(s-ph)\cos(\upsilon h)\zeta^{n-k}_\upsilon+2(phc-s)\zeta^{n-k}_\upsilon,
\end{align*}
by the aid of Euler's formula $\exp(\pm\textrm{i}\upsilon h)=\cos(\upsilon h)\pm\textrm{i}\sin(\upsilon h)$.
Noticing that
\begin{equation*}
2A\cos(\upsilon h)+A'=2\tau^\alpha\kappa p^2s(1-\cos(\upsilon h))
    +2\omega^\alpha_0(s-ph)\cos(\upsilon h)+2\omega^\alpha_0(phc-s),
\end{equation*}
and the inequalities
\begin{equation*}
    s-ph>0, \quad phc-s>0, \quad s-ph<phc-s,
\end{equation*}
we obtain
\begin{align}\label{eq19}
 \zeta^n_\upsilon=-\sum_{k=1}^{n-1}\omega^\alpha_kG\zeta^{n-k}_\upsilon
  +\sum_{k=0}^{n-1}\omega^\alpha_kG\zeta^{0}_\upsilon,
\end{align}
with a fixed quantity
\begin{equation*}
G=\frac{\omega^\alpha_0(s-ph)\cos(\upsilon h)+\omega^\alpha_0(phc-s)}{\tau^\alpha\kappa p^2s(1-\cos(\upsilon h))
    +\omega^\alpha_0(s-ph)\cos(\upsilon h)+\omega^\alpha_0(phc-s)},
\end{equation*}
not more than $1$. \!To show $|\zeta^n_\upsilon|\leq|\zeta^0_\upsilon|$, we use mathematical induction. As $n=1$, by Eq. (\ref{eq19}), we trivially have
$|\zeta^1_\upsilon|\leq|\zeta^{0}_\upsilon|$, since $\omega^\alpha_0G\leq1$. Assuming that
\begin{equation}\label{eq20}
    |\zeta^m_\upsilon|\leq |\zeta^0_\upsilon|, \quad m=1,2,\ldots,n-1,
\end{equation}
it follows from Lemma \ref{le1} that
\begin{align*}
 |\zeta^n_\upsilon|&\leq\Bigg|-\sum_{k=1}^{n-1}\omega^\alpha_kG\zeta^{n-k}_\upsilon
  +\sum_{k=0}^{n-1}\omega^\alpha_kG\zeta^{0}_\upsilon\Bigg| \\
  &\leq\Bigg(1-\sum_{k=0}^{n-1}\omega^\alpha_k+\sum_{k=0}^{n-1}\omega^\alpha_k\Bigg)
   G\max\limits_{0\leq m\leq n-1}|\zeta^m_\upsilon| \\
  &=G\max\limits_{0\leq m\leq n-1}|\zeta^m_\upsilon|,
\end{align*}
which implies $|\zeta^n_\upsilon|\leq |\zeta^0_\upsilon|$ for $G<1$ and the assumption (\ref{eq20}).
Hence, we realize that the perturbation remains  bounded
by its initial perturbation unconditionally at any time level. This proves what is required.
\end{proof}

\section{Numerical experiments}\label{s6}
In this part, the proposed exponential B-spline collocation method is tested on a couple of numerical examples, which suffice to
gauge its accuracy and realistic performance. The computed errors are measured by $L^2$- and $L^\infty$-norms, i.e.,
\begin{align*}
&||u(x,t)-u_N(x,t)||_{L^2}=\sqrt{h\sum^{M-1}_{j=1}\Big|u(x_j,t)-u_N(x_j,t)\Big|^2},\\
&||u(x,t)-u_N(x,t)||_{L^\infty}=\max\limits_{1\leq j\leq M-1}\Big|u(x_j,t)-u_N(x_j,t)\Big|,
\end{align*}
and for every concrete problem, the tension parameter $p$ is optimally selected.
The resulting algebraic equations are handled by Thomas algorithm and the numerical results may be compared
with the other existent methods. \\

\noindent
\textbf{Example 6.1.}
Let $a=0$, $b=1$, $T=1$, and the initial boundary conditions
$\varphi(x)=0$, $g_1(t)=0$, $g_2(t)=0$.
The right side is given as
\begin{equation*}
f(x,t)=\frac{\Gamma(1+\alpha)}{\Gamma(\mu+1-\alpha)}t^{\mu-\alpha}x^3(1-x)-6\kappa t^\mu x(1-2x),
\end{equation*}
to enforce the exact solution $u(x,t)=t^\mu x^3(1-x)$. Taking $\kappa=1$, $\mu=2+\alpha$, $p=1.18$, the algorithm is run on
the meshes using collocation numbers $M=128$, $N=3200$, and $M=256$, $N=6400$,
with various fractional differentiation $\alpha$. Table \ref{tab1} reports the absolute errors at several nodal points when $t=T$.
It is obvious that the method is considerably robust and accurate. \\

\begin{table*}
\centering
\caption{The absolute errors at some nodal points with $p=1.18$ and various $\alpha$ for Example 6.1} \label{tab1}
\begin{tabular}{lllllll}
\hline
\multicolumn{1}{l}{\multirow{2}{0.6cm}{x}}
&\multicolumn{3}{l}{$M=128$, $N=3200$} &\multicolumn{3}{l}{$M=256$, $N=6400$} \\
\cline{2-7}& $\alpha=0.3$  &$\alpha=0.6$ & $\alpha=0.9$  &$\alpha=0.3$  &$\alpha=0.6$ & $\alpha=0.9$ \\
\hline  0.1     &4.8077e-6  &4.9205e-6  &  5.1822e-6  &1.2909e-6  &   1.3984e-6  &1.6069e-6 \\
        0.2     &8.8365e-6  &9.0944e-6  &  9.6736e-6  &2.3330e-6  &   2.5585e-6  &2.9931e-6   \\
        0.3     &1.1667e-5  &1.2103e-5  &  1.3046e-5  &3.0516e-6  &   3.3966e-6  &4.0561e-6   \\
        0.4     &1.3323e-5  &1.3957e-5  &  1.5284e-5  &3.5061e-6  &   3.9726e-6  &4.8565e-6   \\
        0.5     &1.3817e-5  &1.4637e-5  &  1.6304e-5  &3.6600e-6  &   4.2262e-6  &5.2909e-6   \\
        0.6     &1.3252e-5  &1.4187e-5  &  1.6052e-5  &3.5199e-6  &   4.1375e-6  &5.2916e-6   \\
        0.7     &1.1547e-5  &1.2493e-5  &  1.4349e-5  &3.0707e-6  &   3.6688e-6  &4.7805e-6   \\
        0.8     &8.7289e-6  &9.5223e-6  &  1.1061e-5  &2.3479e-6  &   2.8378e-6  &3.7446e-6   \\
        0.9     &4.8491e-6  &5.3116e-6  &  6.2017e-6  &1.3059e-6  &   1.5864e-6  &2.1042e-6  \\
\hline
\end{tabular}
\end{table*}

\noindent
\textbf{Example 6.2.}
Recalling the Mittag-Leffler function
\begin{equation*}
    E_\alpha(z)=\sum_{k=0}^{\infty}\frac{z^k}{\Gamma(\alpha k+1)}, \quad 0<\alpha<1,
\end{equation*}
endowed with  ${^C_0}D^\alpha_tE_\alpha(-\lambda t^\alpha)=-\lambda E_\alpha(-\lambda t^\alpha)$ \cite{Ref069},
we consider Eqs. (\ref{eq01})-(\ref{eq03}) on domain (0,1) with
\begin{equation*}
  u(x,0)=\sin(\pi x/2), \quad g_1(t)=0, \quad g_2(t)=E_\alpha(- t^\alpha),
\end{equation*}
and homogeneous force term. It is easy to verify that its exact solution takes the form
$u(x,t)=E_\alpha(- t^\alpha)\sin(\pi x/2)$, when $\kappa=4/\pi^2$.
On collocating the domains by setting $M=50$, $N=2500$, and $M=100$, $N=10000$,
the numerical results corresponding to $p=1.52$ at $t=1$ are tabulated in Table \ref{tab2}, where we observe that
the proposed method is quite stable and accurate. \\

\begin{table*}
\centering
\caption{The absolute errors at some nodal points with $p=1.52$ and various $\alpha$ for Example 6.2} \label{tab2}
\begin{tabular}{lllllll}
\hline
\multicolumn{1}{l}{\multirow{2}{0.6cm}{x}}
&\multicolumn{3}{l}{$M=50$, $N=2500$} &\multicolumn{3}{l}{$M=100$, $N=10000$} \\
\cline{2-7}& $\alpha=0.3$  &$\alpha=0.6$ & $\alpha=0.9$  &$\alpha=0.3$  &$\alpha=0.6$ & $\alpha=0.9$ \\
\hline  0.1     &2.6511e-6  &1.7151e-6  &1.3626e-7  &6.6926e-07  &  4.3003e-7  &3.3909e-8\\
        0.2     &5.1402e-6  &3.3299e-6  &2.5439e-7  &1.2977e-06  &  8.3493e-7  &6.3294e-8 \\
        0.3     &7.3057e-6  &4.7433e-6  &3.3856e-7  &1.8445e-06  &  1.1893e-6  &8.4212e-8 \\
        0.4     &8.9870e-6  &5.8526e-6  &3.7746e-7  &2.2693e-06  &  1.4674e-6  &9.3849e-8 \\
        0.5     &1.0024e-5  &6.5525e-6  &3.6624e-7  &2.5317e-06  &  1.6429e-6  &9.0999e-8 \\
        0.6     &1.0259e-5  &6.7351e-6  &3.0810e-7  &2.5915e-06  &  1.6886e-6  &7.6468e-8 \\
        0.7     &9.5339e-6  &6.2885e-6  &2.1542e-7  &2.4088e-06  &  1.5766e-6  &5.3358e-8 \\
        0.8     &7.6895e-6  &5.0973e-6  &1.1038e-7  &1.9433e-06  &  1.2779e-6  &2.7210e-8 \\
        0.9     &4.5661e-6  &3.0421e-6  &2.4885e-8  &1.1543e-06  &  7.6266e-7  &6.0045e-9 \\
\hline
\end{tabular}
\end{table*}

\noindent
\textbf{Example 6.3.}
In this test, we consider a special case of $\alpha=0.5$. Let $a=0$, $b=1$, $\kappa=1$, $\varphi(x)=\cos(6\pi x)$,
$g_1(t)=\textrm{erfcx}(36\pi^2\sqrt{t})$, $g_2(t)=g_1(t)$, $f(x,t)=0$, and the true solution (see \cite{Ref033})
\begin{equation*}
    u(x,t)=\cos(6\pi x)\textrm{erfcx}(36\pi^2\sqrt{t}),
\end{equation*}
where $\textrm{erfcx}(\cdot)$ is the \emph{scaled complementary error function}, given by
\begin{equation*}
    \textrm{erfcx}(z)=\frac{2}{\sqrt{\pi}}\exp(z^2)\int_{z}^\infty \exp{(-\eta^2)}d\eta.
\end{equation*}
The computation is run with $p=0.01$. Fig. \ref{fig1} describes the numerical solutions at different time
compared to the exact solutions when $M=100$, $N=500$. As the graph shows, the exact and numerical solutions are in good agreement.
Table \ref{tab3} reports the global errors at $t=1$, $t=2$, and $t=3$ with various 
$M$, $N$. It is visible that Eqs. (\ref{eqap})-(\ref{eq17}) well solve the test problem as expected. \\

\begin{figure}[!htb]
\centering
\includegraphics[width=4.5in]{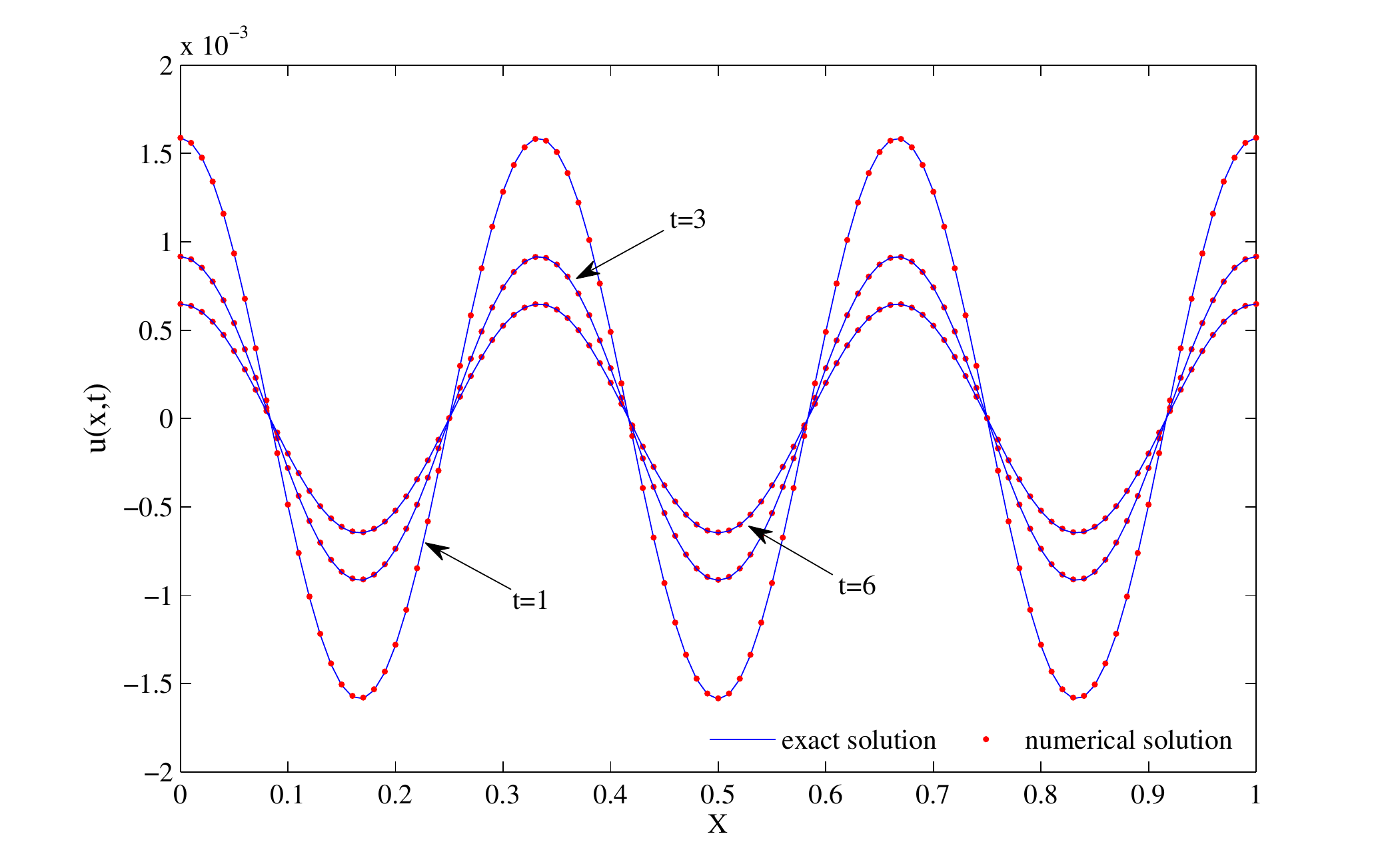}
\caption{The exact and numerical solutions at $t=1$, $3$, and $6$, when $M=100$, $N=500$.}\label{fig1}
\end{figure}

\begin{table*}[!htb]
\centering
\caption{The global errors at different time with $p=0.01$ and various $M$, $N$ for Example 6.3} \label{tab3}
\begin{tabular}{lllllll}
\hline
\multicolumn{1}{l}{\multirow{2}{1cm}{$M$, $N$}}
&\multicolumn{3}{l}{$||u-u_N||_{L^2}$} &\multicolumn{3}{l}{$||u-u_N||_{L^\infty}$}\\
\cline{2-7}& $t=1$  &$t=2$ &$t=3$  &$t=1$  &$t=2$ &$t=3$ \\
\hline  32, 4000    &5.4324e-5  &3.8735e-5  &3.1716e-5  &8.8587e-5  &  6.3211e-5  &5.1768e-5 \\
        64, 4000    &1.3203e-5  &9.6132e-6  &7.9258e-6  &2.1878e-5  &  1.5795e-5  &1.2985e-5 \\
        128, 9000   &3.0826e-6  &2.3273e-6  &1.9418e-6  &5.2449e-6  &  3.8791e-6  &3.2140e-6 \\
        256, 9000   &5.3117e-7  &4.7773e-7  &4.2641e-7  &9.5837e-7  &  8.4372e-7  &7.3492e-7 \\
        1024, 250   &5.9928e-6  &2.1116e-6  &1.1412e-6  &9.4652e-6  &  3.3298e-6  &1.7970e-6 \\
        1024, 500   &3.6171e-6  &1.2685e-6  &6.8189e-7  &5.6050e-6  &  1.9593e-6  &1.0500e-6 \\
        2048, 1000  &2.2589e-6  &7.9847e-7  &4.3255e-7  &3.4336e-6  &  1.2092e-6  &6.5273e-7 \\
        2048, 2000  &1.4133e-6  &4.9781e-7  &2.6867e-7  &2.1044e-6  &  7.3642e-7  &3.9486e-7 \\
\hline
\end{tabular}
\end{table*}


\noindent
\textbf{Example 6.4.}
Let $\kappa=2$, $T=1$, $\varphi(x)=0$, $g_1(t)=0$, $g_2(t)=g_1(t)$, and the force function
\begin{align*}
f(x,t)=\frac{2t^{2-\alpha}x(1-x)\exp(x)}{\Gamma(3-\alpha)}+2t^2x(x+3)\exp(x);
\end{align*}
we consider Eqs. (\ref{eq01})-(\ref{eq03}) on domain (0,1) solved by Eqs. (\ref{eqap})-(\ref{eq17})
and the cubic B-spline collocation method (CBSCM) \cite{Ref025}. The exact solution of the model is $u(x,t)=t^2x(1-x)\exp(x)$.
In Fig. \ref{fig3}, we display their absolute error distributions at $t=T$ when $\alpha=0.6$, $M=50$, $N=2500$ by taking
$p=1.45$, $2.35$, $2.53$, and $3.35$, respectively. In line with the graphs, we then choose $p=2.53$ and show a comparison
of their absolute errors at some nodal points detailedly in Table \ref{tab5}, where the accuracy of our
method is found to be overall better than CBSCM. In Fig. \ref{fig4}, we plot the global errors versus
the variation of mesh size $1/M$ in log-log scale, with $\alpha=0.6$, $p=2.53$, and $N=11000$,
which demonstrates the convergent orders of these methods are all basically of order $2$. \\

\begin{figure}
\begin{minipage}[t]{0.49\linewidth}
\includegraphics[width=2.3in]{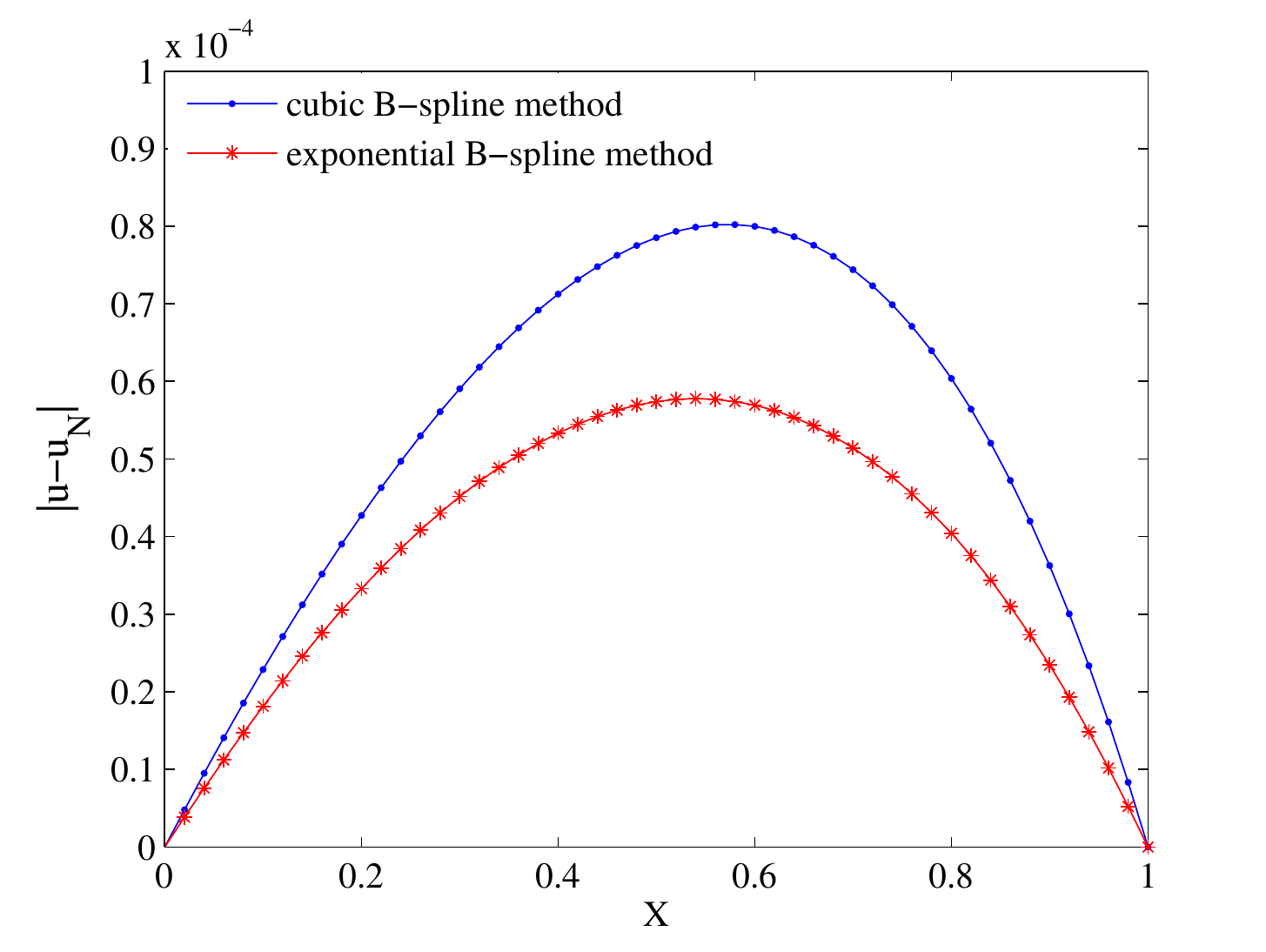}
\end{minipage}
\begin{minipage}[t]{0.5\linewidth}
\includegraphics[width=2.3in]{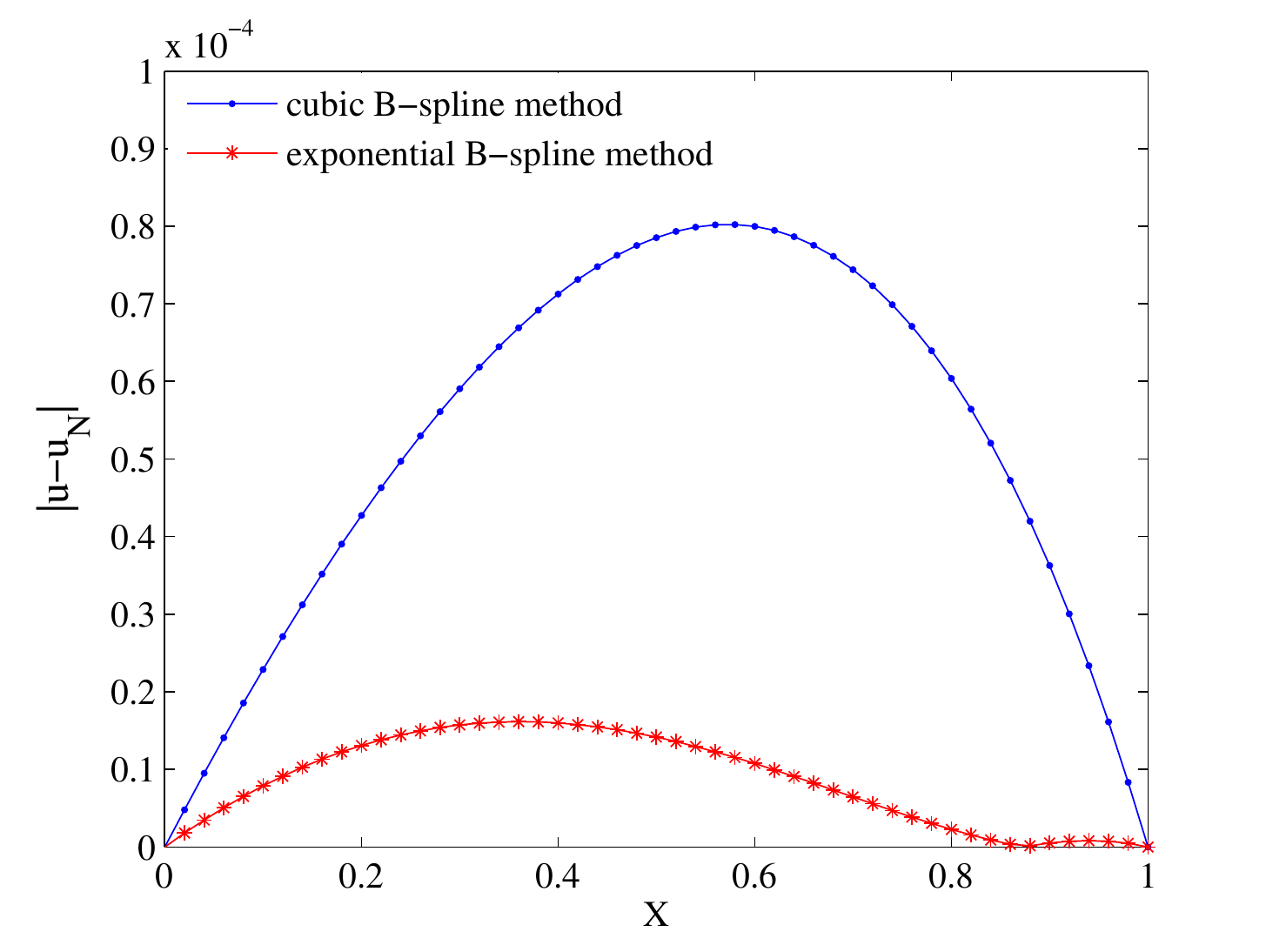}
\end{minipage}
\begin{minipage}[t]{0.49\linewidth}
\includegraphics[width=2.3in]{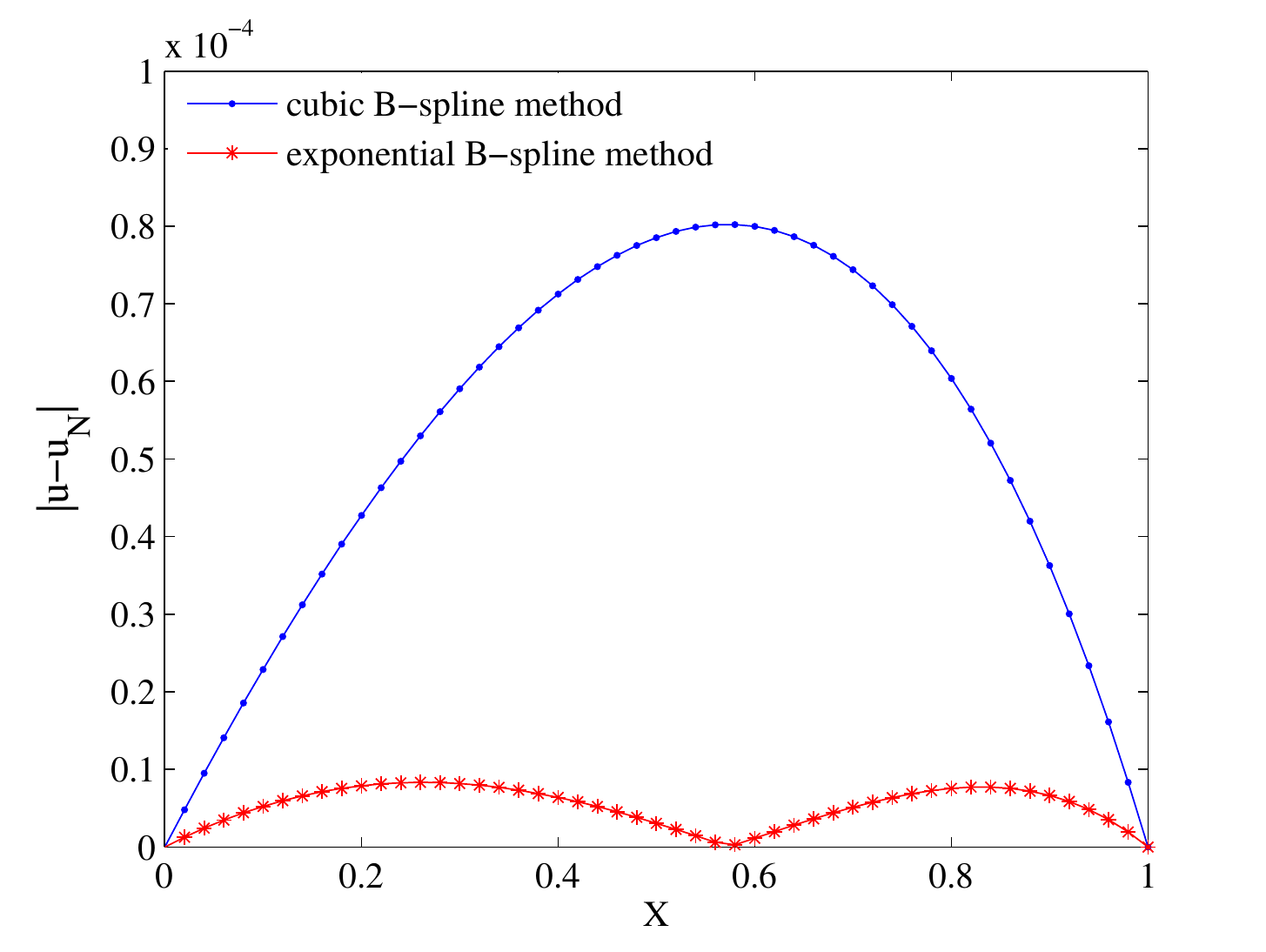}
\end{minipage}
\begin{minipage}[t]{0.49\linewidth}
\includegraphics[width=2.3in]{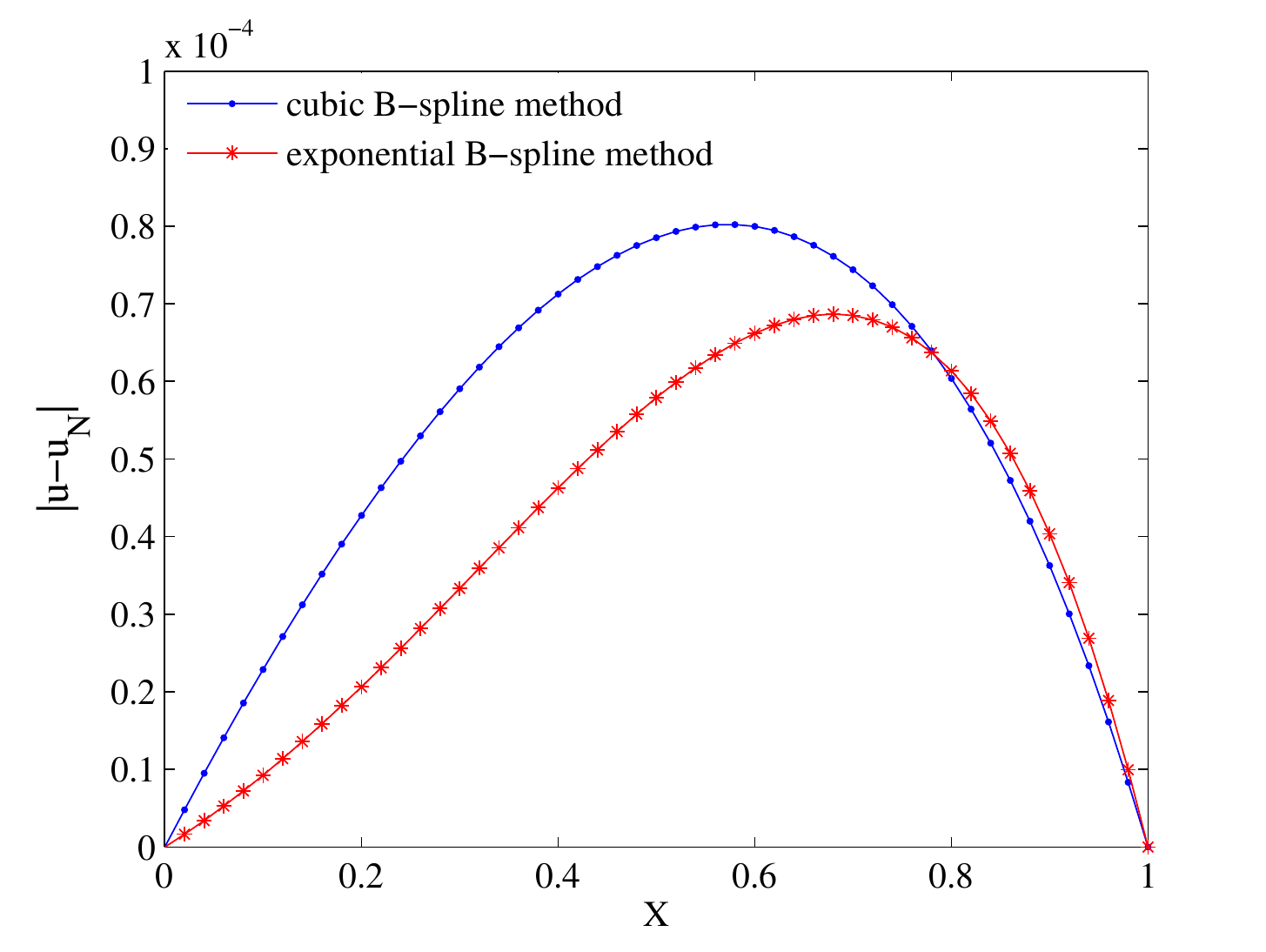}
\end{minipage}
\caption{The absolute error distributions for $p=1.45$, $2.35$, $2.53$, and $3.35$ when $M=50$, $N=2500$.}\label{fig3}
\end{figure}

\begin{table*}
\centering
\caption{The comparison of absolute errors between CBSCM and our method when $p=2.53$.} \label{tab5}
\begin{tabular}{lllll}
\hline
\multicolumn{1}{l}{\multirow{2}{0.6cm}{x}}
&\multicolumn{2}{l}{$M=25$, $N=625$} &\multicolumn{2}{l}{$M=50$, $N=2500$} \\
\cline{2-5}&CBSCM   &our method &CBSCM   &our method \\
\hline  0.1     &7.4297e-5  &1.7521e-5  &2.2881e-5  &5.2238e-6  \\
        0.2     &1.7128e-4  &3.1447e-5  &4.2725e-5  &7.8796e-6   \\
        0.3     &2.2488e-4  &3.3028e-5  &5.9053e-5  &8.1580e-6   \\
        0.4     &2.8563e-4  &2.5425e-5  &7.1249e-5  &6.3822e-6  \\
        0.5     &3.1076e-4  &1.5134e-5  &7.8544e-5  &3.0497e-6  \\
        0.6     &3.2060e-4  &4.5617e-6  &7.9982e-5  &1.1163e-6  \\
        0.7     &3.0518e-4  &1.7614e-5  &7.4401e-5  &5.1068e-6  \\
        0.8     &2.4201e-4  &3.0270e-5  &6.0392e-5  &7.5532e-6  \\
        0.9     &1.6825e-4  &2.8820e-5  &3.6264e-5  &6.6400e-6  \\
\hline
\end{tabular}
\end{table*}

\noindent
\textbf{Example 6.5.}
In the last test, we consider the fractional heat transfer model on $(0,1)$ with $\kappa=1$, $T=1$,
$\varphi(x)=0$, $g_1(t)=0$, and $g_2(t)=H(t-0.2)-H(t-0.6)$, where $H(\cdot)$ denotes 
Heaviside Step Function. As in \cite{Ref037}, the heat flux at the boundary point $x=0$ approximated by forward difference is of particular interest
and the computed results are compared with the ones obtained by implicit finite difference method in the literature.
Taking $p=1$, $M=500$, $N=125$, Fig. \ref{fig5} exhibits the heat flux at $x=0$ changing over the time for $\alpha=0.1$, $0.5$, and $0.9$.
It is clear that the results of these methods in presence are highly consistent, which reveals that our method precisely captures the heat flux. \\

\section{Conclusion}
In this research, an efficient exponential B-spline based collocation method is proposed to tackle the
diffusion equation with a time-fractional derivative in Caputo sense discretized by a GMMP scheme.
It leads to a linear system of algebraic equations with tri-diagonal coefficient matrix and thereby can be solved speedily by Thomas algorithm.
The solvability is strictly evaluated and the stable analysis is proceeded by adopting a fractional von Neumann procedure.
Its codes are tested on several given models and the numerical results validate that this method is capable of dealing with these equations very well.
The comparison with the other methods manifests its practicability and advantages.
Moreover, the derived method is easy and economical to carry out, so it can be served as
an alternative choice to model the other fractional problems. \\

\begin{figure}
\begin{minipage}[t]{0.5\linewidth}
\includegraphics[width=2.4in]{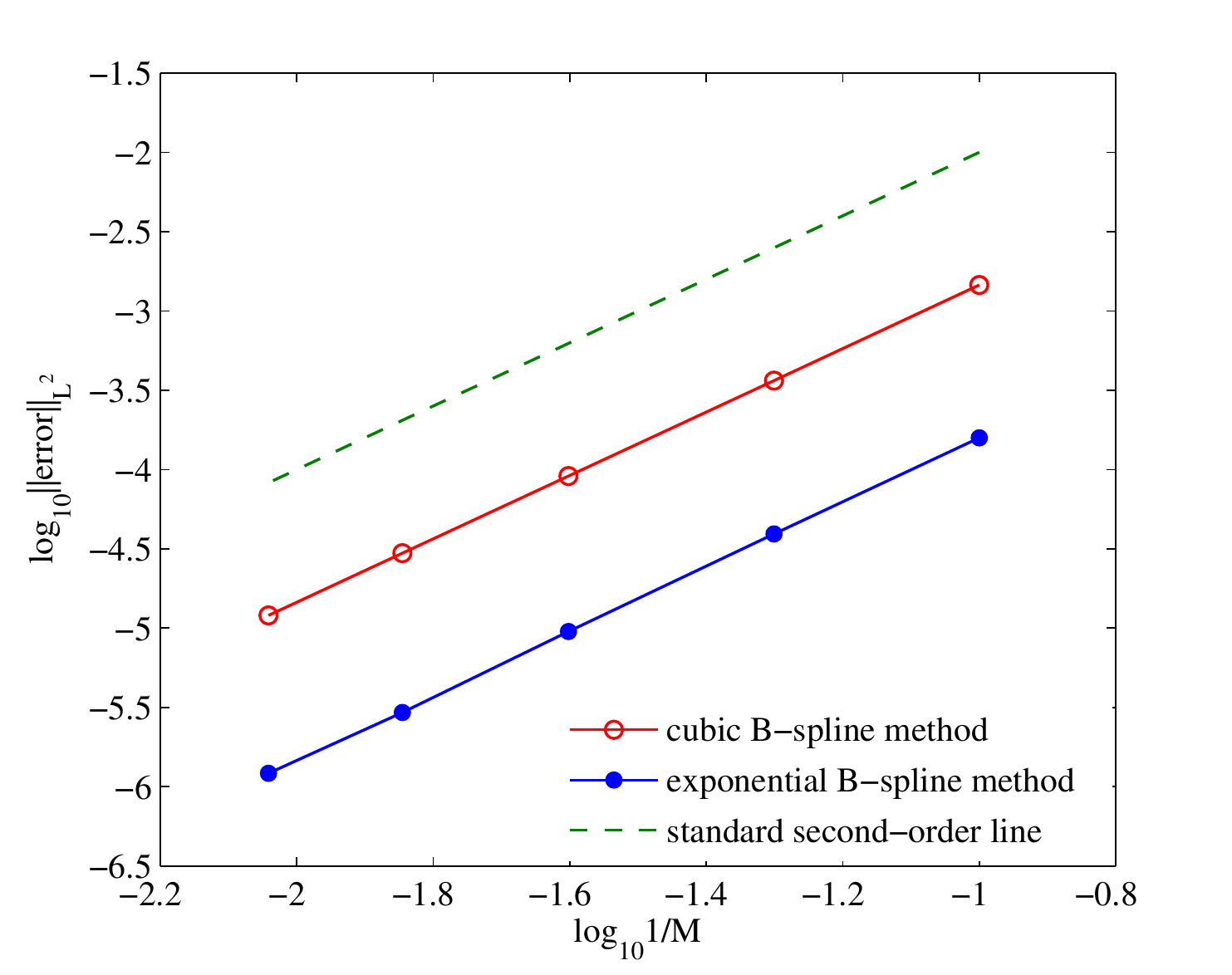}
\end{minipage}
\begin{minipage}[t]{0.5\linewidth}
\includegraphics[width=2.4in]{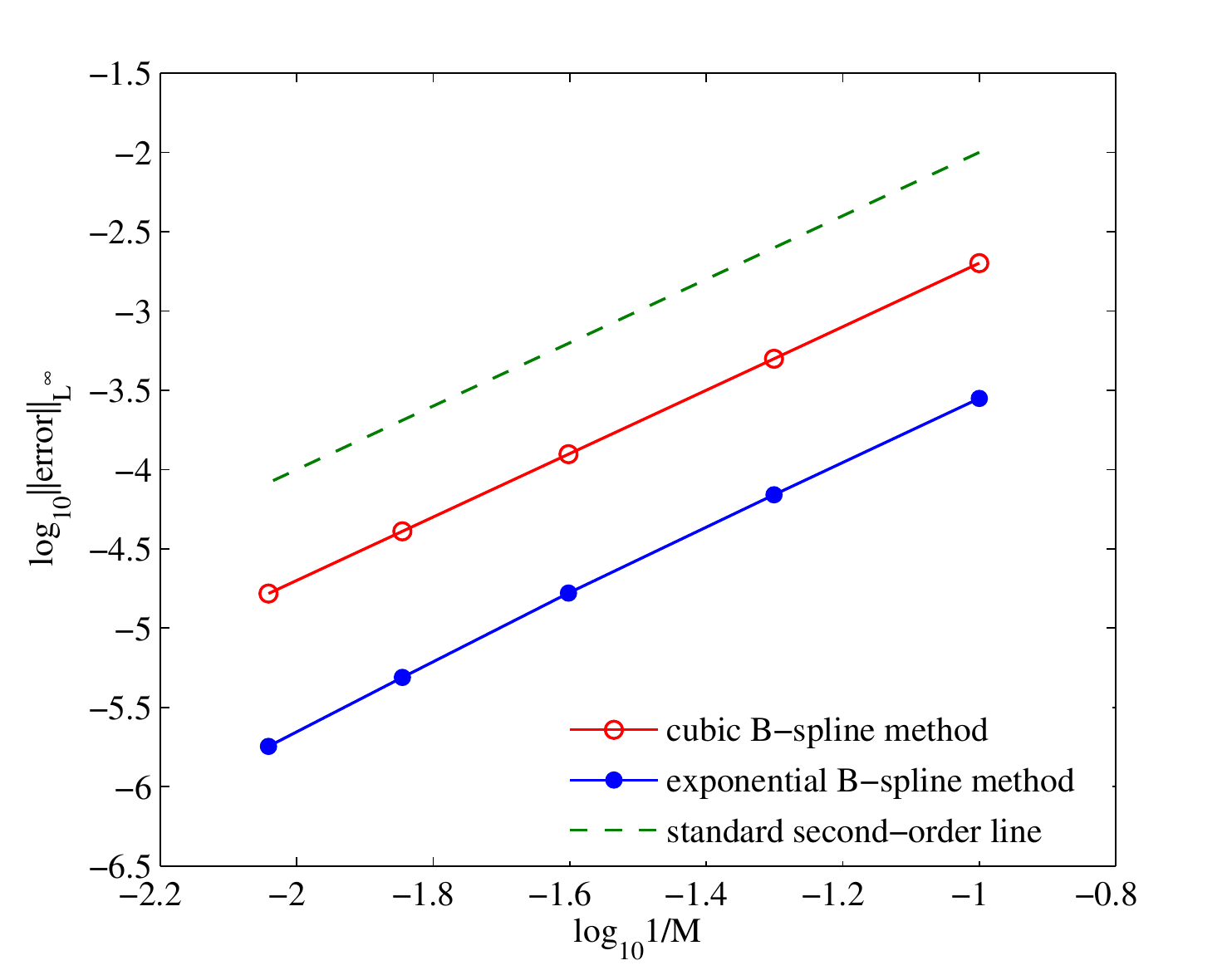}
\end{minipage}
\caption{The convergent orders of the methods with $\alpha=0.6$, $p=2.53$, and $N=11000$.}\label{fig4}
\end{figure}

\begin{figure}
\centering
\includegraphics[width=4.0in]{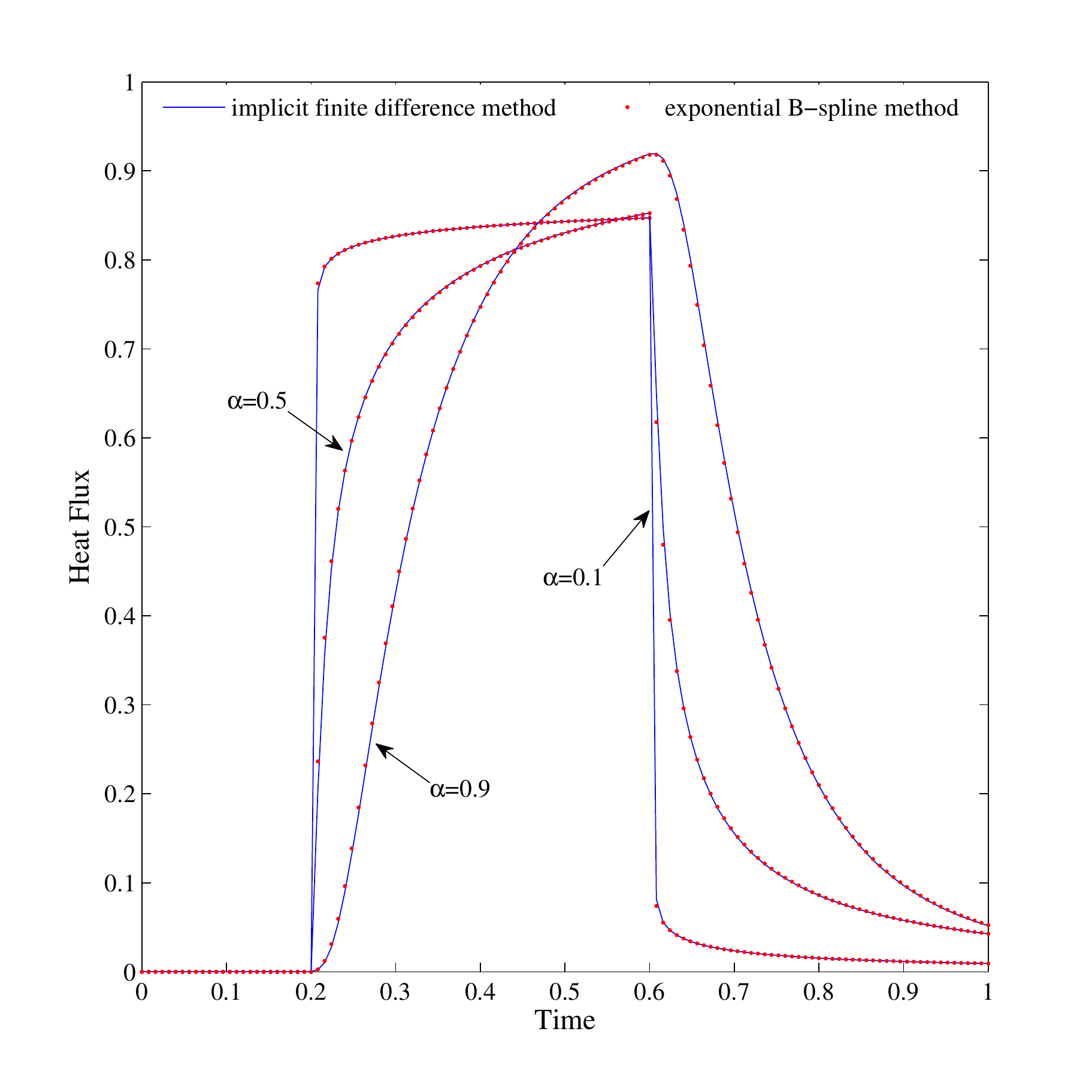}
\caption{The heat flux at $x=0$ for various $\alpha$ with $M=500$, $N=125$.}\label{fig5}
\end{figure}

\noindent
\textbf{Acknowledgement}:
This research was supported by National Natural Science Foundations of China (No.11471262 and 11501450).

\bibliographystyle{model1b-num-names}
\bibliography{mybib}

\begin{thebibliography}{37}
\expandafter\ifx\csname natexlab\endcsname\relax\def\natexlab#1{#1}\fi
\providecommand{\bibinfo}[2]{#2}
\ifx\xfnm\relax \def\xfnm[#1]{\unskip,\space#1}\fi
\bibitem[{Adams and Gelhar(1992)}]{Ref003}
\bibinfo{author}{E.E. Adams}, \bibinfo{author}{L.W. Gelhar},
  \bibinfo{title}{Field study of dispersion in a heterogeneous aquifer: 2.
  {S}patial moments analysis}, \bibinfo{journal}{Water Res. Research}
  \bibinfo{volume}{28} (\bibinfo{year}{1992}) \bibinfo{pages}{3293--3307}.
\bibitem[{Barkai(2002)}]{Ref005}
\bibinfo{author}{E.~Barkai}, \bibinfo{title}{{CTRW} pathways to the fractional
  diffusion equation}, \bibinfo{journal}{Chem. Phys.} \bibinfo{volume}{284}
  (\bibinfo{year}{2002}) \bibinfo{pages}{13--27}.
\bibitem[{Brunner et~al.(2010)Brunner, Ling and Yamamoto}]{Ref033}
\bibinfo{author}{H.~Brunner}, \bibinfo{author}{L.~Ling},
  \bibinfo{author}{M.~Yamamoto}, \bibinfo{title}{Numerical simulations of 2{D}
  fractional subdiffusion problems}, \bibinfo{journal}{J. Comput. Phys.}
  \bibinfo{volume}{229} (\bibinfo{year}{2010}) \bibinfo{pages}{6613--6622}.
\bibitem[{Cui(2009)}]{Ref023}
\bibinfo{author}{M.R. Cui}, \bibinfo{title}{Compact finite difference method
  for the fractional diffusion equation}, \bibinfo{journal}{J. Comput. Phys.}
  \bibinfo{volume}{228} (\bibinfo{year}{2009}) \bibinfo{pages}{7792--7804}.
\bibitem[{Deng(2007)}]{Ref038}
\bibinfo{author}{W.H. Deng}, \bibinfo{title}{Numerical algorithm for the time
  fractional {F}okker-{P}lanck equation}, \bibinfo{journal}{J. Comput. Phys.}
  \bibinfo{volume}{227} (\bibinfo{year}{2007}) \bibinfo{pages}{1510--1522}.
\bibitem[{Deng et~al.(2015)Deng, Chen and Barkai}]{Ref036}
\bibinfo{author}{W.H. Deng}, \bibinfo{author}{M.H. Chen},
  \bibinfo{author}{E.~Barkai}, \bibinfo{title}{Numerical algorithms for the
  forward and backward fractional {F}eynman-{K}ac equations},
  \bibinfo{journal}{J. Sci. Comput.} \bibinfo{volume}{62}
  (\bibinfo{year}{2015}) \bibinfo{pages}{718--746}.
\bibitem[{Gorenflo and Mainardi(1998)}]{Ref006}
\bibinfo{author}{R.~Gorenflo}, \bibinfo{author}{F.~Mainardi},
  \bibinfo{title}{Random walk models for space-fractional diffusion processes},
  \bibinfo{journal}{Fract. Calc. Appl. Anal.} \bibinfo{volume}{1}
  (\bibinfo{year}{1998}) \bibinfo{pages}{167--191}.
\bibitem[{Gorenflo et~al.(2002)Gorenflo, Mainardi, Moretti and
  Paradisi}]{Ref029}
\bibinfo{author}{R.~Gorenflo}, \bibinfo{author}{F.~Mainardi},
  \bibinfo{author}{D.~Moretti}, \bibinfo{author}{P.~Paradisi},
  \bibinfo{title}{Time fractional diffusion: {A} discrete random walk
  approach}, \bibinfo{journal}{Nonlinear Dynam.} \bibinfo{volume}{29}
  (\bibinfo{year}{2002}) \bibinfo{pages}{129--143}.
\bibitem[{Huang et~al.(2015)Huang, Yu, Wang, Li and An}]{Ref017}
\bibinfo{author}{C.B. Huang}, \bibinfo{author}{X.J. Yu},
  \bibinfo{author}{C.~Wang}, \bibinfo{author}{Z.Z. Li},
  \bibinfo{author}{N.~An}, \bibinfo{title}{A numerical method based on fully
  discrete direct discontinuous {G}alerkin method for the time fractional
  diffusion equation}, \bibinfo{journal}{Appl. Math. Comput.}
  \bibinfo{volume}{264} (\bibinfo{year}{2015}) \bibinfo{pages}{483--492}.
\bibitem[{Jiang and Ma(2011)}]{Ref011}
\bibinfo{author}{Y.J. Jiang}, \bibinfo{author}{J.T. Ma},
  \bibinfo{title}{High-order finite element methods for time-fractional partial
  differential equations}, \bibinfo{journal}{J. Comput. Appl. Math.}
  \bibinfo{volume}{235} (\bibinfo{year}{2011}) \bibinfo{pages}{3285--3290}.
\bibitem[{Jin et~al.(2015)Jin, Lazarov, Pasciak and Zhou}]{Ref022}
\bibinfo{author}{B.T. Jin}, \bibinfo{author}{R.~Lazarov},
  \bibinfo{author}{J.~Pasciak}, \bibinfo{author}{Z.~Zhou},
  \bibinfo{title}{Error analysis of semidiscrete finite element methods for
  inhomogeneous time-fractional diffusion}, \bibinfo{journal}{IMA J. Numer.
  Anal.} \bibinfo{volume}{35} (\bibinfo{year}{2015}) \bibinfo{pages}{561--582}.
\bibitem[{Kilbas et~al.(2006)Kilbas, Srivastava and Trujillo}]{Ref069}
\bibinfo{author}{A.A. Kilbas}, \bibinfo{author}{H.M. Srivastava},
  \bibinfo{author}{J.J. Trujillo}, \bibinfo{title}{Theory and {A}pplications of
  {F}ractional {D}ifferential {E}quations}, \bibinfo{publisher}{Amsterdam},
  \bibinfo{year}{2006}.
\bibitem[{Li and Wang(2009)}]{Ref009}
\bibinfo{author}{C.P. Li}, \bibinfo{author}{Y.H. Wang},
  \bibinfo{title}{Numerical algorithm based on adomian decomposition for
  fractional differential equations}, \bibinfo{journal}{Comput. Math. Appl.}
  \bibinfo{volume}{57} (\bibinfo{year}{2009}) \bibinfo{pages}{1672--1681}.
\bibitem[{Li and Xu(2009)}]{Ref021}
\bibinfo{author}{X.J. Li}, \bibinfo{author}{C.J. Xu}, \bibinfo{title}{A
  space-time spectral method for the time fractional diffusion equation},
  \bibinfo{journal}{SIAM J. Numer. Anal.} \bibinfo{volume}{47}
  (\bibinfo{year}{2009}) \bibinfo{pages}{2108--2131}.
\bibitem[{Lin and Xu(2007)}]{Ref012}
\bibinfo{author}{Y.M. Lin}, \bibinfo{author}{C.J. Xu}, \bibinfo{title}{Finite
  difference/spectral approximations for the time-fractional diffusion
  equation}, \bibinfo{journal}{J. Comput. Phys.} \bibinfo{volume}{225}
  (\bibinfo{year}{2007}) \bibinfo{pages}{1533--1552}.
\bibitem[{Liu et~al.(2011)Liu, Gu, Zhuang, Liu and Nie}]{Ref016}
\bibinfo{author}{Q.~Liu}, \bibinfo{author}{Y.T. Gu}, \bibinfo{author}{P.H.
  Zhuang}, \bibinfo{author}{F.W. Liu}, \bibinfo{author}{Y.F. Nie},
  \bibinfo{title}{An implicit {RBF} meshless approach for time fractional
  diffusion equations}, \bibinfo{journal}{Comput. Mech.} \bibinfo{volume}{48}
  (\bibinfo{year}{2011}) \bibinfo{pages}{1--12}.
\bibitem[{Luo et~al.(2016)Luo, Huang, Wu and Gu}]{Ref024}
\bibinfo{author}{W.H. Luo}, \bibinfo{author}{T.Z. Huang}, \bibinfo{author}{G.C.
  Wu}, \bibinfo{author}{X.M. Gu}, \bibinfo{title}{Quadratic spline collocation
  method for the time fractional subdiffusion equation},
  \bibinfo{journal}{Appl. Math. Comput.} \bibinfo{volume}{276}
  (\bibinfo{year}{2016}) \bibinfo{pages}{252--265}.
\bibitem[{Mainardi(1996)}]{Ref040}
\bibinfo{author}{F.~Mainardi}, \bibinfo{title}{The fundamental solutions for
  the fractional diffusion-wave equation}, \bibinfo{journal}{Appl. Math. Lett.}
  \bibinfo{volume}{9} (\bibinfo{year}{1996}) \bibinfo{pages}{23--28}.
\bibitem[{McCartin(1981)}]{Ref027}
\bibinfo{author}{B.J. McCartin}, \bibinfo{title}{Theory, {C}omputation, and
  {A}pplication of {E}xponential {S}plines}, \bibinfo{publisher}{Courant
  Mathematics and Computing Laboratory Research and Development Report,
  DOE/ER/03077-171}, \bibinfo{year}{1981}.
\bibitem[{McCartin(1991)}]{Ref028}
\bibinfo{author}{B.J. McCartin}, \bibinfo{title}{Theory of exponential
  splines}, \bibinfo{journal}{J. Approx. Theory} \bibinfo{volume}{66}
  (\bibinfo{year}{1991}) \bibinfo{pages}{1--23}.
\bibitem[{Meerschaert and Tadjeran(2004)}]{Ref007}
\bibinfo{author}{M.M. Meerschaert}, \bibinfo{author}{C.~Tadjeran},
  \bibinfo{title}{Finite difference approximations for fractional
  advection-dispersion flow equations}, \bibinfo{journal}{J. Comput. Appl.
  Math.} \bibinfo{volume}{172} (\bibinfo{year}{2004}) \bibinfo{pages}{65--77}.
\bibitem[{Metzler and Klafter(2000)}]{Ref002}
\bibinfo{author}{R.~Metzler}, \bibinfo{author}{J.~Klafter}, \bibinfo{title}{The
  random walk's guide to anomalous diffusion: {A} fractional dynamics
  approach}, \bibinfo{journal}{Phys. Rep.} \bibinfo{volume}{339}
  (\bibinfo{year}{2000}) \bibinfo{pages}{1--77}.
\bibitem[{Momani and Odibat(2007)}]{Ref068}
\bibinfo{author}{S.~Momani}, \bibinfo{author}{Z.~Odibat},
  \bibinfo{title}{Numerical comparison of methods for solving linear
  differential equations of fractional order}, \bibinfo{journal}{Chaos. Soliton
  \& Frac.} \bibinfo{volume}{31} (\bibinfo{year}{2007})
  \bibinfo{pages}{1248--1255}.
\bibitem[{Murillo and Yuste(2009)}]{Ref030}
\bibinfo{author}{J.Q. Murillo}, \bibinfo{author}{S.B. Yuste},
  \bibinfo{title}{On three explicit difference schemes for fractional diffusion
  and diffusion-wave equations}, \bibinfo{journal}{Phys. Scripta}
  \bibinfo{volume}{T136} (\bibinfo{year}{2009}) \bibinfo{pages}{014025}.
\bibitem[{Murio(2008)}]{Ref037}
\bibinfo{author}{D.A. Murio}, \bibinfo{title}{Implicit finite difference
  approximation for time fractional diffusion equations},
  \bibinfo{journal}{Comput. Math. Appl.} \bibinfo{volume}{56}
  (\bibinfo{year}{2008}) \bibinfo{pages}{1138--1145}.
\bibitem[{Nigmatulin(1986)}]{Ref004}
\bibinfo{author}{R.~Nigmatulin}, \bibinfo{title}{The realization of the
  generalized transfer equation in a medium with fractal geometry},
  \bibinfo{journal}{Phys. Stat. Sol. B} \bibinfo{volume}{133}
  (\bibinfo{year}{1986}) \bibinfo{pages}{425--430}.
\bibitem[{Pirkhedri and Javadi(2015)}]{Ref026}
\bibinfo{author}{A.~Pirkhedri}, \bibinfo{author}{H.H.S. Javadi},
  \bibinfo{title}{Solving the time-fractional diffusion equation via
  {S}inc-{H}aar collocation method}, \bibinfo{journal}{Appl. Math. Comput.}
  \bibinfo{volume}{257} (\bibinfo{year}{2015}) \bibinfo{pages}{317--326}.
\bibitem[{Podlubny(1999)}]{Ref032}
\bibinfo{author}{I.~Podlubny}, \bibinfo{title}{Fractional {D}ifferential
  {E}quations}, \bibinfo{publisher}{Academic Press}, \bibinfo{year}{1999}.
\bibitem[{Povstenko(2010)}]{Ref039}
\bibinfo{author}{Y.~Povstenko}, \bibinfo{title}{Signaling problem for
  time-fractional diffusion-wave equation in a half-space in the case of
  angular symmetry}, \bibinfo{journal}{Nonlinear Dynam.} \bibinfo{volume}{59}
  (\bibinfo{year}{2010}) \bibinfo{pages}{593--605}.
\bibitem[{Ren et~al.(2013)Ren, Sun and Zhao}]{Ref020}
\bibinfo{author}{J.C. Ren}, \bibinfo{author}{Z.Z. Sun},
  \bibinfo{author}{X.~Zhao}, \bibinfo{title}{Compact difference scheme for the
  fractional sub-diffusion equation with {N}eumann boundary conditions},
  \bibinfo{journal}{J. Comput. Phys.} \bibinfo{volume}{232}
  (\bibinfo{year}{2013}) \bibinfo{pages}{456--467}.
\bibitem[{Richardson(1926)}]{Ref001}
\bibinfo{author}{L.F. Richardson}, \bibinfo{title}{Atmospheric diffusion shown
  on a {D}istance-{N}ighbour graph}, \bibinfo{journal}{Proc. Roy. Soc.}
  \bibinfo{volume}{110} (\bibinfo{year}{1926}) \bibinfo{pages}{709--737}.
\bibitem[{Sayevand et~al.(2016)Sayevand, Yazdani and Arjang}]{Ref025}
\bibinfo{author}{K.~Sayevand}, \bibinfo{author}{A.~Yazdani},
  \bibinfo{author}{F.~Arjang}, \bibinfo{title}{Cubic {B}-spline collocation
  method and its application for anomalous fractional diffusion equations in
  transport dynamic systems}, \bibinfo{journal}{J. Vib. Control}
  \bibinfo{volume}{22} (\bibinfo{year}{2016}) \bibinfo{pages}{2173--2186}.
\bibitem[{Yuste(2006)}]{Ref019}
\bibinfo{author}{S.B. Yuste}, \bibinfo{title}{Weighted average finite
  difference methods for fractional diffusion equations}, \bibinfo{journal}{J.
  Comput. Phys.} \bibinfo{volume}{216} (\bibinfo{year}{2006})
  \bibinfo{pages}{264--274}.
\bibitem[{Yuste and Acedo(2005)}]{Ref014}
\bibinfo{author}{S.B. Yuste}, \bibinfo{author}{L.~Acedo}, \bibinfo{title}{An
  explicit finite difference method and a new von {N}eumann-type stability
  analysis for fractional diffusion equations}, \bibinfo{journal}{SIAM J.
  Numer. Anal.} \bibinfo{volume}{42} (\bibinfo{year}{2005})
  \bibinfo{pages}{1862--1874}.
\bibitem[{Yuste and Murillo(2012)}]{Ref031}
\bibinfo{author}{S.B. Yuste}, \bibinfo{author}{J.Q. Murillo}, \bibinfo{title}{A
  finite difference method with non-uniform timesteps for fractional diffusion
  equations}, \bibinfo{journal}{Comput. Phys. Commu.} \bibinfo{volume}{183}
  (\bibinfo{year}{2012}) \bibinfo{pages}{2594--2600}.
\bibitem[{Zhuang et~al.(2008)Zhuang, Liu, Anh and Turner}]{Ref010}
\bibinfo{author}{P.~Zhuang}, \bibinfo{author}{F.~Liu},
  \bibinfo{author}{V.~Anh}, \bibinfo{author}{I.~Turner}, \bibinfo{title}{New
  solution and analytical techniques of the implicit numerical method for the
  sub-diffusion equation}, \bibinfo{journal}{SIAM J. Numer. Anal.}
  \bibinfo{volume}{46} (\bibinfo{year}{2008}) \bibinfo{pages}{1079--1095}.
\bibitem[{Zhuang and Liu(2006)}]{Ref015}
\bibinfo{author}{P.H. Zhuang}, \bibinfo{author}{F.W. Liu},
  \bibinfo{title}{Implicit difference approximation for the time fractional
  diffusion equation}, \bibinfo{journal}{J. Comput. Appl. Math.}
  \bibinfo{volume}{22} (\bibinfo{year}{2006}) \bibinfo{pages}{87--99}.

\end{thebibliography}

\end{document}